\let\svthefootnote\thefootnote
\newcommand\freefootnote[1]{%
	\let\thefootnote\relax%
	\footnotetext{#1}%
	\let\thefootnote\svthefootnote%
}
\newtheorem{theorem}{Theorem}[section]
\newtheorem{lemma}[theorem]{Lemma}
\theoremstyle{definition}
\newtheorem{definition}[theorem]{Definition}
\newtheorem{remark}[theorem]{Remark}
\numberwithin{equation}{section}
\newcommand*\mR{\mathbb{R}}
\newcommand*\mP{\mathbb{P}}
\newcommand*\mE{\mathbb{E}}
\newcommand*\supp{{\rm supp}}
\newcommand*\eps{\varepsilon}
\newcommand*\I{\textnormal{I}}
\newcommand*\II{\textnormal{II}}
\begin{document}

\title{Equivalence of definitions of fractional caloric functions}

\author{Artur Rutkowski}
\email{artur.rutkowski@pwr.edu.pl}
\address{Faculty of Pure and Applied Mathematics,
	Wroc\l aw University of Science and Technology,
	Wyb. Wyspia\'nskiego 27, 50-370 Wroc\l aw, Poland}
\subjclass[2020]{Primary 35S16, 35C15, 35D30}
\keywords{fractional Laplacian, caloric function, {caloric measure}, heat equation, distributional solution}

\begin{abstract}
	We prove equivalence between nonnegative distributional solutions of the fractional heat equation and caloric functions, i.e., functions satisfying the mean value property with respect to the space-time isotropic $\alpha$-stable process. We also provide sufficient conditions for the boundary and exterior data under which the solutions are classical and we give off-diagonal estimates for the derivatives of the Dirichlet heat kernel and the lateral Poisson kernel, which might be of their own interest.
\end{abstract}
\maketitle
\section{Introduction}
Let $d\geq 2$ and $\alpha\in (0,2)$. For functions $u\colon \mR^d \to \mR$ and $x\in \mR^d$ we define
\begin{align*}
	\Delta^{\alpha/2}u(x) = \int_{\mR^d} (u(y) - u(x) - \nabla u(x) (y-x) \textbf{1}_{B_1(x)}(y)) \nu(y-x)\, dy,
\end{align*}
where $\nu(y) = c_{d,\alpha} |y|^{-d-\alpha}$ and $B_r(x) = B(x,r)$, $r>0$, is the Euclidean ball in $\mR^d$. The above integral converges, e.g., for $u\in C^2_b(\mR^d)$. 

The goal of the paper is to investigate the connection between three notions of solution to the fractional heat equation in a bounded open set $D\subset \mR^d$:
\begin{align}\label{eq:FHE}
	\begin{cases}
		\partial_tu(t,x) = \Delta^{\alpha/2}u(t,x),\quad &t\in (0,T),\ x\in D,\\
		u(t,x) = g(t,x),\quad &t\in (0,T),\ x\in D^c,\\
		u(0,x) = u_0(x),\quad &x\in D.
	\end{cases}
\end{align}
The first definition is formulated as a mean value property with respect to the space-time isotropic $\alpha$-stable process $\dot{X}_t :=(-t,X_t)$, where $X_t$ is the isotropic $\alpha$-stable process in $\mR^d$. For a Borel set $G\subset \mR^{d+1}$ we let $\tau_G = \inf\{t>0: \dot{X}_t \notin G\}$\footnote{We use the same notation for the first exit times of $X_t$, i.e., for Borel $D\subset \mR^d$, $\tau_D = \inf\{t>0: X_t\notin D\}$.}.
\begin{definition}
	Let $T>0$. We say that $u\colon (0,T)\times \mR^d\to [0,\infty)$ is caloric in $(0,T)\times D$, if for every $(t,x)\in (0,T)\times D$,
	\begin{equation}\label{eq:para}
		u(t,x) = \mE^{(t,x)} u(\dot{X}_{\tau_{G}}) <\infty,
	\end{equation}
	for every open $G\subset\subset (0,T)\times D$ such that $(t,x)\in \overline{G}$. We say that $u\colon [0,T)\times \mR^d\to [0,\infty)$ is caloric in $[0,T)\times D$ if the same holds for every open $G\subset\subset [0,T)\times D$.
\end{definition}

We stress that in the above definition we only consider nonnegative functions.

For the classical heat equation, the approach via the space-time Brownian motion is discussed in the book of Doob \cite{MR1814344}, see also \cite{MR833742,MR2907452}. Fractional caloric functions defined as above were studied by Chen and Kumagai \cite{MR2008600}, and more recently the structure of caloric functions was investigated in the papers of Chan, G\'omez-Castro, and V\'azquez \cite{MR4462819} and Armstrong, Bogdan, and the author~\cite{HK}. Theorem~6.3 in \cite{HK} states that if $D$ is a bounded Lipschitz set, then any function $u$ caloric in $[0,T)\times D$ equal to $g$ on $[0,T)\times D^c$ can be decomposed into three parts:
\begin{align}\label{eq:urepr}
	u(t,x) = P_t^D u_0(x) + \int_0^t\int_{D^c} J^D(t,x,s,z)g(s,z)\, dz\, ds + \int_{[0,t)}\int_{\partial D} \eta_{t-s,Q}(x)\, \mu(dQds),
\end{align}
where $\mu$ is a nonnegative Radon measure on $[0,T)\times \partial D$ and $P_t^D$, $J^D$, and $\eta$ are, respectively, the Dirichlet semigroup, the lateral Poisson kernel, and the parabolic Martin kernel of $D$ for $\Delta^{\alpha/2}$, see Section~\ref{sec:prelim} for details. For $\alpha$-$harmonic$ functions the Poisson representation immediately yields smoothness, which implies that the functions satisfy the fractional Laplace equation. This is not the case for caloric functions: there are examples which are only H\"older in time \cite[Remark~5.12]{HK}, see also the counterexample by Chang-Lara and D\'{a}vila \cite[Example~2.4.1]{MR3115838} given for viscosity solutions of \eqref{eq:FHE}, but also relevant for caloric functions. In these examples, either the \textit{exterior condition} $g$ or the \textit{boundary condition} $\mu$ is strongly discontinuous in time. 

As one of the main results of this paper, we give a sufficient condition on $g$ and $\mu$ under which a caloric function is a classical solution of the fractional heat equation.
\begin{definition}
	We say that $u\colon [0,T]\times \mR^d \to [0,\infty)$ is a classical solution to \eqref{eq:FHE} if $u\in C((0,T),L^1(1\wedge \nu))$, $u\in C((0,T)\times D)$, $\partial_t u,$ $Du$ and $D^2u$ exist and are continuous in $(0,T)\times D$ and \eqref{eq:FHE} is satisfied pointwise.
\end{definition}
{Let $P_D$ be the Poisson kernel of $D$, see \eqref{eq:PDGD} below for the definition. In the formulation of the next result, we use $L^1$ spaces weighted with $D^c\ni z\mapsto P_D(x_0,z)$ for some fixed $x_0\in D$. Since $D$ is Lipschitz, this function is a probability density (hence nonnegative and integrable), blows up as $z\to \partial D$, and behaves like $1\wedge \nu(z)$ away from $\partial D$, see  \cite{Chen1998,MR1991120} for the sharp estimates.}
\begin{theorem}\label{th:mvpclass}
	Let $D$ be bounded and Lipschitz and assume that $u\geq 0$ is caloric in $[0,T)\times D$ with the representation \eqref{eq:urepr}. If 
	 {$g\in C^{\rm Dini}_{\rm loc}((0,T),L^1(D^c,P_D(x_0,\cdot)))$ for some $x_0\in D$}, and $\mu\in C^{\rm Dini}_{{\rm loc}}((0,T),\mathcal{M}(\partial D))$, then $u$ is a classical solution to \eqref{eq:FHE}.
\end{theorem}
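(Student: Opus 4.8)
The plan is to analyse the three summands of \eqref{eq:urepr} separately. Write $u=u_1+u_2+u_3+g\mathbf 1_{D^c}$, where $u_1(t,x)=P_t^Du_0(x)$, $u_2$ is the lateral/exterior term, $u_3$ is the boundary term, and $u_1,u_2,u_3$ are extended by $0$ outside $D$. Using the identity $J^D(t,x,s,z)=\int_D p^D(t-s,x,y)\nu(y-z)\,dy$ recalled in Section~\ref{sec:prelim}, the exterior term is a Duhamel integral,
\[
u_2(t,x)=\int_0^t\!\!\int_D p^D(t-s,x,y)\,f(s,y)\,dy\,ds,\qquad f(s,y):=\int_{D^c}\nu(y-z)\,g(s,z)\,dz,
\]
and, crucially, $\Delta^{\alpha/2}(g\mathbf 1_{D^c})(t,x)=f(t,x)$ for every $x\in D$. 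For each piece I would show that it is jointly $C^1$ in $t$ and $C^2$ in $x$ on $(0,T)\times D$ and solves $\partial_t=\Delta^{\alpha/2}$ there up to an explicit source; summing, the source $f$ coming from $u_2$ cancels against $\Delta^{\alpha/2}(g\mathbf 1_{D^c})$, giving $\partial_t u=\Delta^{\alpha/2}u$ in $D$. The relation $u=g$ on $D^c$ and the initial condition are built into \eqref{eq:urepr}, while $u\in C((0,T),L^1(1\wedge\nu))$ and $u\in C((0,T)\times D)$ follow from the corresponding continuity of $g$ and $\mu$ together with the kernel bounds of Section~\ref{sec:prelim}, since $u_1,u_2,u_3$ are supported in $\overline D$. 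A preliminary reduction makes the hypothesis usable: for $y$ in a compact $K\subset D$ the kernel $z\mapsto\nu(y-z)$ and all its $y$-derivatives are dominated on $D^c$ by $C_K(1\wedge\nu(z))$; hence $f(s,\cdot)\in C^\infty(K)$ with seminorms $\lesssim_K\|g(s,\cdot)\|_{L^1(1\wedge\nu)}$, and, by the same bound, $s\mapsto\partial_y^\beta f(s,x)$ inherits the Dini modulus $\omega_g$ of $g$, uniformly for $x\in K$. Likewise $\mu\in C^{\rm Dini}((0,T),\mathcal{M}(\partial D))$ has no atom in time and furnishes a Dini modulus for the boundary term.

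The term $u_1$ is routine: interior smoothing of the Dirichlet heat kernel — the off-diagonal bounds for $\partial_t^jD_x^\ell p^D$ of Section~\ref{sec:prelim}, with the integrability of $u_0$ implicit in \eqref{eq:urepr} — shows $u_1\in C^\infty((0,T)\times D)$ with $\partial_t u_1=\Delta^{\alpha/2}u_1$ there, and $u_1(t,\cdot)$ bounded with support in $\overline D$ for $t>0$. The term $u_3$ is handled by the same mechanism as $u_2$: for $x\in K\subset D$ the parabolic Martin kernel $\eta_{\tau,Q}(x)$ is smooth in $(\tau,x)$ with off-diagonal derivative bounds uniform in $Q\in\partial D$, so $\partial_t,D_x,D_x^2,\Delta^{\alpha/2}_x$ may be moved inside $\int_{[0,t)}\!\int_{\partial D}(\cdot)\,\mu(dQ\,ds)$, the only delicate point being the limit $s\uparrow t$, controlled by the Dini modulus of $\mu$ exactly as below. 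The spatial derivatives of $u_2$ are the easy part: fixing $x\in K$ and Taylor-expanding $f(s,y)$ in $y$ about $x$ to third order, one pairs the expansion with $D_x^2p^D(t-s,x,\cdot)$; the constant term carries $D_x^2\mP^x(\tau_D>t-s)=O_K(t-s)$, the linear term integrates to $0$ against $(y-x)$ over balls centred at $x$ by parity of the free-kernel Hessian (modulo a lower-order Dirichlet correction), and the quadratic, cubic, and remainder terms are integrable in $s$ by the off-diagonal bounds. Since $f$ is continuous in $t$, dominated convergence then gives that $D_x^2u_2$ (and $D_xu_2$) exists and is continuous on $(0,T)\times D$ — here mere continuity of the data would suffice.

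The Dini condition is forced by the time derivative. Differentiating the Duhamel formula,
\[
\partial_t u_2(t,x)=f(t,x)+\int_0^t\!\!\int_D \partial_\tau p^D(\tau,x,y)\big|_{\tau=t-s}\,f(s,y)\,dy\,ds,
\]
and now $\int_D|\partial_\tau p^D(\tau,x,y)|\,dy\asymp\tau^{-1}$, so the naive bound diverges. Expanding $f(s,y)$ in $y$ about $x$ as above, the terms of order $|\beta|\ge1$ take the form $\int_0^t c_\beta(\tau,x)\,\partial_y^\beta f(t-\tau,x)\,d\tau$ with $|c_\beta(\tau,x)|\lesssim_K\tau^{-1}$; writing $\partial_y^\beta f(t-\tau,x)=\partial_y^\beta f(t,x)+[\partial_y^\beta f(t-\tau,x)-\partial_y^\beta f(t,x)]$, the constant-in-$\tau$ piece equals $\partial_y^\beta f(t,x)\cdot\tfrac1{\beta!}\int_D p^D(t,x,y)(y-x)^\beta\,dy$ after telescoping $\int_0^t\partial_\tau p^D(\tau,x,y)\,d\tau=p^D(t,x,y)-\delta_x(y)$, hence is finite and continuous in $(t,x)$, while the increment is bounded by $C_K\int_0^t\tau^{-1}\omega_g(\tau)\,d\tau$, \emph{finite precisely because $\omega_g$ is a Dini modulus}; this same estimate furnishes a $t$-uniform integrable majorant, giving continuity. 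The term $|\beta|=0$ is harmless because $\int_D\partial_\tau p^D(\tau,x,y)\,dy=\partial_\tau\mP^x(\tau_D>\tau)$ is bounded on $K\times(0,t]$. Running the same interchange inside $\Delta^{\alpha/2}u_2=\int_0^t\!\int_D\partial_\tau p^D(\tau,x,y)|_{\tau=t-s}f(s,y)\,dy\,ds$ then yields $\partial_t u_2=\Delta^{\alpha/2}u_2+f$ on $(0,T)\times D$; adding the four pieces and cancelling $f$ against $\Delta^{\alpha/2}(g\mathbf 1_{D^c})$ gives \eqref{eq:FHE}. The main obstacle is exactly this analysis at the temporal singularity $t-s\to0$ for $u_2$ and $u_3$: it requires the sharp off-diagonal estimates for the $x$-derivatives and the $t$-derivative of $p^D$ and $\eta$ developed in Section~\ref{sec:prelim}, combined with the Dini modulus to make $\int_0^t\tau^{-1}\omega(\tau)\,d\tau$ converge — and the divergence of this integral for merely continuous data is precisely what the Hölder-in-time counterexamples exploit.
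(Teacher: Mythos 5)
Your overall architecture coincides with the paper's: reduce to the lateral term, write it as a Duhamel integral against $f(s,y)=\int_{D^c}\nu(y-z)g(s,z)\,dz$, and defeat the $\tau^{-1}$ singularity of the time-differentiated kernel by subtracting the time-$t$ value of the data and integrating $\tau^{-1}\omega(\tau)$ via the Dini hypothesis. That cancellation is indeed the heart of the matter, and your observation that $\Delta^{\alpha/2}(g\mathbf 1_{D^c})=f$ in $D$ is the correct bookkeeping for why the Duhamel source disappears from the equation. However, there is a genuine gap in how you propose to run the estimates. You Taylor-expand $f(s,\cdot)$ about the interior point $x$ and pair the expansion termwise, with absolute values, against $\partial_\tau p^D_\tau(x,y)$ integrated over \emph{all} of $D$. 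But $f(s,y)\asymp\delta_D(y)^{-\alpha}$ as $y\to\partial D$ (and $|D^n_yf(s,y)|\lesssim\delta_D(y)^{-\alpha-n}$, cf.\ \eqref{eq:Dnbound}), so no Taylor polynomial centred at $x$ controls the remainder near the boundary, and the remainder term forces you to bound $\int_{\{\delta_D(y)<r\}}|\partial_\tau p^D_\tau(x,y)|\,\delta_D(y)^{-\alpha}\,dy$ for small $\tau$. The natural pointwise bound $|\partial_\tau p^D_\tau(x,y)|\lesssim\tau^{-1}\bigl(1\wedge\delta_D(y)^{\alpha/2}/\sqrt{\tau}\bigr)$ yields $\tau^{1/\alpha-2}$, which is \emph{not} integrable near $\tau=0$ when $\alpha\geq 1$; your "off-diagonal bounds" do not close this. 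The paper avoids the problem by never separating the $y$- and $z$-integrations in this way: it estimates $\partial_tJ^B(t,x,s,z)=\Delta^{\alpha/2}_xJ^B(t,x,s,z)$ as a whole via \eqref{eq:c2bound}, exploiting that $x\mapsto J^B(t,x,s,z)$ and its $x$-derivatives stay bounded by $\delta_B(z)^{-\alpha/2}\vee|z|^{-d-\alpha}$ uniformly as $t-s\to0$ (Theorem~\ref{th:JBest}), and only then integrates against $g(s,z)-g(t,z)$ in $z$. The boundary interaction you are skipping is exactly the content of Lemma~\ref{lem:f} and Theorems~\ref{th:DHKder}--\ref{th:JBest}; these are proved in Section~\ref{sec:estimates}, not quoted from the preliminaries, and they are the technical core of the theorem.

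Two further points. First, your treatment of the Martin-kernel term $u_3$ assumes smoothness and off-diagonal derivative bounds for $\eta_{\tau,Q}(x)$ uniform in $Q$, which are nowhere available; the paper instead uses Lemma~\ref{lem:mutog} to show that the $\mu$-term, restricted to $D$ and viewed from a smaller ball, is itself a Dini-continuous exterior datum in $L^1(1\wedge\nu)$, so it folds into the $g$-term and needs no separate kernel analysis. Second, your verification of $\Delta^{\alpha/2}u_2=\partial_tu_2-f$ by "running the same interchange" presupposes exactly the absolute convergence that fails above; the paper's computation of $\Delta^{\alpha/2}u$ (via the truncated operator, the Poisson kernel $P_B$, and harmonicity of $P_B[g(t)]$) is lengthy precisely because this interchange cannot be done termwise against $f$.
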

Here, $\mathcal{M}(\partial D)$ is the space of finite signed Borel measures on $\partial D$ with the total variation norm. The proof of Theorem~\ref{th:mvpclass} is given in Section~\ref{sec:mvpclass} below.

{The Dini condition emerges in a rather natural way in the proof, as we need to compensate for the singularity of $\partial_t J^{B_1(0)}(t,x,s,z)$, which blows up roughly as $(t-s)^{-1}$ when $s\nearrow t$, see Theorem~\ref{th:JBest} below. Related results and arguments for elliptic equations were given by Burch \cite{MR521856} for second order operators, and Grzywny, Kassmann, and Leżaj \cite{MR4194536} for nonlocal operators -- both papers gave Dini-type conditions on data that are sufficient to get $C^2$ solutions. We were unable to locate exact counterparts of our results in the context of second order parabolic equations, but the solutions obtained as the double layer heat potentials are classical for continuous boundary data, see, e.g., \cite[Theorem~2.3]{MR2907452}, which indicates that it might be possible to weaken the assumptions slightly in our case, too. The majority of the proof of Theorem~\ref{th:mvpclass} consists of establishing new estimates for $J^{B_1}$ and its derivatives, and obtaining the necessary regularity of caloric functions.}

 
The third class of solutions studied here are the distributional solutions. We note that the following definition includes all classical solutions by, e.g., \cite[Lemma~3.3]{MR1671973}.
\begin{definition}
	We say that $u\colon [0,T]\times \mR^d \to [0,\infty)$ is a distributional solution to \eqref{eq:FHE}, if for every $\phi\in C_c^\infty([0,T)\times D)$, 
	\begin{align}\label{eq:weakform}
		\int_D \phi(t,x)u(t,x)\, dx = \int_D \phi(s,x)u(s,x)\, dx + \int_s^t \int_{\mR^d} (\partial_t + \Delta^{\alpha/2}) \phi(\tau,x) u(\tau,x) \, dx\, d\tau,\quad 0\leq s<t<T,
	\end{align}
	and the integrals converge absolutely.
\end{definition}
The second main result of the paper states that being a (nonnegative) caloric function is equivalent to being a distributional solution to \eqref{eq:FHE}, see Theorems~~\ref{th:mvptodist} and \ref{th:disttomvp} below. The analogue for the Poisson equation was given by Bogdan and Byczkowski \cite{MR1671973}, see also Chen \cite{MR2515419} and \cite{MR4194536} for more general results concerning elliptic equations. For the fractional heat equation the authors of \cite{MR4462819} show that in $C^{1,1}$ domains, given that $u_0,g,$ and $\mu$ are regular enough, the functions of the form \eqref{eq:urepr} satisfy a specific weak-dual formulation, which ensures uniqueness of solutions. Our equivalence result, together with \cite{HK}, gives a complete classification of nonnegative distributional solutions to \eqref{eq:FHE} in Lipschitz sets, which seems to be new even when $D$ is smooth. We refer, e.g., to \cite{MR3211862,MR3278937,MR4668351} for other representation results in this vein.
 {The general ideas of proofs of Theorems~\ref{th:mvptodist} and \ref{th:disttomvp} follow those used in the elliptic case. The calculations are slightly different and more involved, as we need to handle the additional time variable. We also use some estimates of $p_t^D$ and $J^D$ obtained recently in \cite{HK}.} 

In Section~\ref{sec:estimates} we establish off-diagonal estimates for the Dirichlet heat kernel and the lateral Poisson kernel and their derivatives, which might be of their own interest. The key result of this section, Lemma~\ref{lem:f}, gives boundary estimates for functions defined by parabolic sweeping. The results of Section~\ref{sec:estimates} are given for the unit ball, which is enough for our purposes, but the arguments work for all bounded $C^{1,1}$ sets after simple modifications. {We note that the fractional Poisson kernels are obtained by integration of the Green kernels against the L\'evy measure, not by taking normal derivatives as in the classical case. Therefore, the approaches to obtaining bounds differ essentially between the classical and the fractional case. Sharp estimates of the lateral Poisson kernel for second order parabolic operators were given by Riahi \cite{MR2139572}, see also Fabes and Salsa \cite{MR709573} for fundamental results on the classical caloric measure.}

The article is organized as follows. Section~\ref{sec:prelim} contains preliminaries, Section~\ref{sec:estimates} is devoted to estimating the kernels, in Section~\ref{sec:mvpdist} we show that caloricity implies being a distributional solution, in Section~\ref{sec:mvpclass} we show that caloric functions are classical solutions for sufficiently good data, and in Section~\ref{sec:distmvp} we show that the distributional solutions are caloric. 

\section*{Acknowledgments} The author would like to thank the anonymous referee for their valuable remarks. 
\section{Preliminaries}\label{sec:prelim} 
All the considered functions and sets are assumed to be Borel. If $f$ is a function of $x$ and other variables, then by $\nabla_x f$, $D^2_x f$, $\Delta^{\alpha/2}_x f$, etc., we mean that the operator is taken with respect to $x$.
\begin{definition}
	Let $I\subseteq \mR$ be an interval and let $(X,d)$ be a metric space. We say that $f\colon I \to X$ is Dini continuous and denote $f\in C^{\rm Dini}(I,X)$, if there exists a modulus of continuity $\omega$ such that
	\begin{align*}
		d(f(t),f(s)) \leq \omega(|t-s|),\ s,t\in I\quad {\rm and}\quad \int_0^1 \frac{\omega(t)}{t}\, dt < \infty.
	\end{align*}
	{We say that $f\in C^{\rm Dini}_{\rm loc}(I,X)$ if it is Dini continuous in any interval relatively compact in $I$.}
\end{definition} 
\begin{definition}
	{We say that an open set $D$ is Lipschitz if there exist $\lambda>0$ and $r_0>0$, such that for all $Q\in \partial D$, up to a rigid motion, $B(Q,r_0)\cap \partial D$ is a graph of a Lipschitz function with Lipschitz constant $\lambda$.}
\end{definition}
On $C^2_c(\mR^d)$, the fractional Laplacian coincides with the generator of the transition semigroup $P_t$ corresponding to the isotropic $\alpha$-stable process, see Sato \cite{MR1739520}. $P_t$ is a convolution semigroup: $P_t f(x) = p_t\ast f(x)$ and 
the \textit{heat kernel} $p_t$ has the following sharp estimates \cite{MR0119247}:
\begin{align}\label{eq:Blum}
	p_t(x) \approx t^{-d/\alpha} \wedge \frac{t}{|x|^{d+\alpha}},\quad t>0,\ x\in\mR^d.
\end{align}
Using the argument given in the proof of \cite[Lemma~5]{MR2283957} one can show that  for $\eps>0$
\begin{align}\label{eq:derivest}
	|D_x^n p_t(x)| \leq C(d,\eps),\quad t>0, |x|>\eps.
\end{align}
The Dirichlet heat kernel of the set $D$ is defined by Hunt's formula \cite{MR1329992}:
\begin{equation}\label{eq:Hunt}
	\begin{split}
	p_t^D(x,y) &= p_t(x,y) - \mE^x[p_{t-\tau_D}(X_{\tau_D},y);\, \tau_D< t]\\
	&= p_t(x,y) - \mE^y[p_{t-\tau_D}(x,X_{\tau_D});\, \tau_D< t],\quad t>0,\ x,y\in D.
	\end{split}
\end{equation}
In particular, $p_t^D(x,y) = p_t^D(y,x)$.  We have $\partial_t p_t^D(x,y) = \Delta^{\alpha/2}_xp_t^D(x,y)$ pointwise, see, e.g., \cite[Lemma~2.5]{HK}. The Dirichlet semigroup $P_t^D$ is defined for, e.g., $f\geq 0$ as \begin{align*}P_t^D f(x) = \int_D p_t^D(x,y)f(y)\, dy, \quad x\in D.\end{align*}
Recall the approximate factorization for the Dirichlet heat kernel \cite{MR2722789}:
\begin{align}\label{eq:factorization}
	p_t^D(x,y)\approx \mP^x(\tau_D > t)p_t(x,y)\mP^y(\tau_D>t),\quad t\in (0,T),\ x,y\in D.
\end{align}
For $D$ of class $C^{1,1}$ the estimates take on a more explicit form \cite{MR2677618}:
\begin{equation}\label{eq:CKS}
	p_t^D(x,y) \approx \bigg(1\wedge \frac{\delta_D(x)^{\alpha/2}}{\sqrt{t}}\bigg)p_t(x,y)\bigg(1\wedge \frac{\delta_D(y)^{\alpha/2}}{\sqrt{t}}\bigg),\quad x,y \in D,\ t\in (0,T).
\end{equation}
We also have the following large time estimates for bounded $C^{1,1}$ sets $D$ \cite{MR2677618,MR1643611}:
\begin{equation}\label{eq:largetimes}
	p_t^D(x,y) \approx e^{-\lambda_1 t}\delta_D(x)^{\alpha/2}\delta_D(y)^{\alpha/2},\quad x,y\in D,\ t\in (T,\infty).
\end{equation}
Here, $\lambda_1>0$ is the first eigenvalue for the Dirichlet fractional Laplacian in $D$. The integral representation \eqref{eq:urepr} of caloric functions uses the parabolic Martin kernel, defined as
\begin{align*}
	\eta_{t,Q}(x) = \lim\limits_{D\ni y\to Q} \frac{p_t^D(x,y)}{\mP^y(\tau_D>1)},\quad t>0,\ x\in D,\ Q\in\partial D.
\end{align*}
The existence of the parabolic Martin kernel in various settings was obtained in, e.g, \cite{MR4635724,HK, bogdan2018}, see also \cite{MR3462074}. 
Similar to the elliptic case \cite{MR3393247,MR1704245}, the parabolic Martin kernel is associated with singular solutions, {as the last term in the representation \eqref{eq:urepr} appears (i.e., is non-zero) only for caloric functions which are unbounded at the lateral boundary.}

We recall the Ikeda--Watanabe formula from \cite{ikeda1962} (see also \cite[(4.13)]{MR3737628}):
\begin{equation}\label{eq:IW}
	\mP^{x} \big[ \tau_{D} \in I, X_{\tau_{D}-} \in A, X_{\tau_{D}} \in B \big] = \int_{I} \int_{B} \int_{A} \nu(y,z)p_{u}^{D} (x,dy) \, dz\, du,
\end{equation}
where $I \subset (0,\infty)$, $A \subset D$, and $B \subset ( \overline{D} )^{c}$. We define the lateral Poisson kernel as follows:
\begin{align*}
	J^D(t,x,s,z) = \int_D p_{t-s}^D(x,y)\nu(y,z)\, dy,\quad s<t,\ x\in D,\ z\in D^c.
\end{align*}
Note that $J^D(t,x,s,z) = J^D(t-s,x,0,z)$. {To make connection with potential theory, we note that $J^D(t,x,s,z)\textbf{1}_{(0,t)}(s)\, dz\, ds + p_t^D(x,y)\, dy$ can be referred to as the caloric measure of the cylinder $(0,t)\times D$, see \cite[1.XV.7]{MR1814344} for the classical case.} 

{We also recall the Green function and the Poisson kernel of $D$:
	\begin{align}\label{eq:PDGD}
		G_D(x,y) := \int_0^\infty p_t^D(x,y)\, dt,\quad P_D(x,z) := \int_D G_D(x,v)\nu(v,z)\, dv,\quad x,y\in D,\ z\in D^c.
	\end{align}
By \eqref{eq:IW}, $P_D(x,\cdot)$ is a density of a sub-probability measure. If $D$ is, e.g., bounded and Lipschitz we have $\int_{D^c} P_D(x,z)\, dz = 1$.}
\begin{lemma}\label{lem:JDdelta}
	Let $D$ be bounded and Lipschitz. Then, $\lim\nolimits_{t-s \to 0^+} J^D(t,x,s,z) = \nu(x,z)$ holds for all $x\in D$ and $z\in D^c$,
	and for any $g\in L^1({B_1^c}, 1\wedge \nu)$ locally bounded on a neighborhood of $\overline{D}$,
	\begin{align*}
	\lim\limits_{t-s \to 0^+} \int_{D^c} J^D(t,x,s,z) g(z)\, dz = \int_{D^c} \nu(x,z) g(z)\, dz,\quad x\in D.
	\end{align*}
\end{lemma}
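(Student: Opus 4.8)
The plan is to analyze $J^D(t,x,s,z) = \int_D p_{t-s}^D(x,y)\nu(y,z)\,dy$ as $t-s\to 0^+$ by splitting the integral over $D$ according to whether $y$ is near $x$ or far from $x$, and separately according to whether $z$ is near $\partial D$ or far. Fix $x\in D$ and $z\in D^c$; since $J^D(t,x,s,z)=J^D(t-s,x,0,z)$, write $r=t-s$ and study $\int_D p_r^D(x,y)\nu(y,z)\,dy$ as $r\to 0^+$. First I would use the heat kernel bound $p_r^D(x,y)\leq p_r(x,y)\approx r^{-d/\alpha}\wedge \frac{r}{|x-y|^{d+\alpha}}$ from \eqref{eq:Blum}. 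On the region $\{y\in D: |y-x|>\rho\}$ for small fixed $\rho>0$ (chosen so that $B(x,\rho)\subset D$ and $z\notin \overline{B(x,\rho)}$), we have $p_r(x,y)\leq r|x-y|^{-d-\alpha}$, so this part of the integral is bounded by $r\int_{|y-x|>\rho}|x-y|^{-d-\alpha}\nu(y,z)\,dy$, which tends to $0$ as $r\to0^+$ provided $\nu(\cdot,z)$ is integrable against $|x-\cdot|^{-d-\alpha}$ on that region — true since $\nu(y,z)=c_{d,\alpha}|y-z|^{-d-\alpha}$ is bounded away from the singularity at $y=z$ uniformly for $y$ in that set when $z$ is a fixed point of $D^c$ (care is needed if $z\in\partial D$, but $z\in D^c$ and $D$ is open, so $z$ has positive distance to... actually $z$ may be on $\partial D^c$; however $z\in D^c$ which is closed, and we only need $\nu(y,z)$ integrable in $y$ over $D$, i.e. $\int_D|y-z|^{-d-\alpha}\,dy<\infty$, which holds because $z\notin D$ and $D$ is a bounded Lipschitz set so $\int_D |y-z|^{-d-\alpha}dy \lesssim \delta_{D^c}(z)^{-\alpha}<\infty$ — this is where I would invoke a standard Lipschitz-domain estimate, or simply note $z\in D^c$ with $D$ open forces the integral to converge since the singularity $y=z$ lies outside the open set $D$... one must be slightly careful at $z\in\partial D$, but the integrability $\int_D|y-z|^{-d-\alpha}dy<\infty$ for Lipschitz $D$ and $z\in\partial D$ is classical).

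Next, on the near region $\{y\in D: |y-x|\leq\rho\}=B(x,\rho)$, the function $y\mapsto\nu(y,z)$ is continuous and bounded, equal to $\nu(x,z)$ at $y=x$. So I would write
\begin{align*}
\int_{B(x,\rho)} p_r^D(x,y)\nu(y,z)\,dy = \nu(x,z)\int_{B(x,\rho)} p_r^D(x,y)\,dy + \int_{B(x,\rho)} p_r^D(x,y)\big(\nu(y,z)-\nu(x,z)\big)\,dy.
\end{align*}
The error term is bounded by $\sup_{y\in B(x,\rho)}|\nu(y,z)-\nu(x,z)|\cdot\int_D p_r^D(x,y)\,dy\leq \sup_{y\in B(x,\rho)}|\nu(y,z)-\nu(x,z)|$, using $\int_D p_r^D(x,y)\,dy=\mP^x(\tau_D>r)\leq 1$; this can be made small by taking $\rho$ small, uniformly in $r$. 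For the main term, $\int_{B(x,\rho)} p_r^D(x,y)\,dy = \mP^x(\tau_D>r) - \int_{D\setminus B(x,\rho)} p_r^D(x,y)\,dy$, and as $r\to0^+$ we have $\mP^x(\tau_D>r)\to 1$ (right-continuity of paths, $x\in D$ open) while $\int_{D\setminus B(x,\rho)} p_r^D(x,y)\,dy\leq \int_{|y-x|>\rho}p_r(x,y)\,dy\to 0$. Hence $\limsup_{r\to0^+}|J^D(r,x,0,z)-\nu(x,z)|\leq 2\sup_{y\in B(x,\rho)}|\nu(y,z)-\nu(x,z)|$ for every small $\rho$, and letting $\rho\to0$ gives the first claim.

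For the second claim, I would integrate the first in $z$ against $g$ and justify passing the limit inside $\int_{D^c}\cdots g(z)\,dz$ by dominated convergence. The decomposition above already produces the pointwise-in-$z$ bound, but I need a dominating function; since $J^D(t,x,s,z)\leq \int_D p_{t-s}(x,y)\nu(y,z)\,dy$ and, away from $\overline D$, $\nu(y,z)$ is comparable to $\nu(x,z)$ for $y$ near $x$, one gets $J^D(t,x,s,z)\lesssim \nu(x,z)$ uniformly in $t-s$ small, for $z$ in the far region; near $\overline D$ one uses that $g$ is locally bounded there together with the integrability $\int_{\{z\in D^c:\,\mathrm{dist}(z,D)<1\}}\sup_{r\le 1}J^D(r,x,0,z)\,dz<\infty$, which follows from $J^D(r,x,0,z)\le \int_D p_r(x,y)\nu(y,z)dy \lesssim \nu(x,z)\wedge(\text{something integrable})$ plus Fubini — more cleanly, $\int_{D^c}\int_D p_r(x,y)\nu(y,z)dy\,dz = \int_D p_r(x,y)\big(\int_{D^c}\nu(y,z)dz\big)dy$ and the inner integral is $\lesssim \delta_D(y)^{-\alpha}$, which is integrable over $D$ for Lipschitz $D$, giving a uniform-in-$r$ majorant for the local-boundedness region. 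The main obstacle I anticipate is exactly this dominated-convergence bookkeeping near $\partial D$: controlling $\sup_{r\le 1}J^D(r,x,0,z)$ by an integrable function of $z\in D^c$ uniformly down to the boundary, which requires combining the factorization \eqref{eq:factorization} or \eqref{eq:CKS} with the hypothesis that $g$ is locally bounded on a neighborhood of $\overline D$ and the standard Lipschitz estimate $\int_D \delta_D(y)^{-\alpha+\beta}dy<\infty$; the far-field part is routine once one knows $g\in L^1(1\wedge\nu)$.
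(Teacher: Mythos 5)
Your argument for the first, pointwise limit follows the same route as the paper's (split $y$ near/far from $x$, use that $\nu(\cdot,z)$ is continuous and bounded on $B(x,\rho)$ and that $\mP^x(\tau_D>r)\to 1$), and it is correct for $z$ with $\dist(z,D)>0$. But the "classical" fact you invoke to cover $z\in\partial D$ is false: for a Lipschitz domain and $z\in\partial D$ an interior cone at $z$ gives $\int_D |y-z|^{-d-\alpha}\,dy\gtrsim\int_0^1 r^{-1-\alpha}\,dr=\infty$. This only affects the (measure-zero, and in the paper itself tacitly excluded) case $z\in\partial D$, so it is a blemish rather than a fatal error, but do not assert it.

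The genuine gap is in the second claim. You want dominated convergence in $z$, which requires a single integrable majorant $F(z)\geq \sup_{0<r\le 1}J^D(r,x,0,z)$ on the boundary layer $\{z\in D^c:\dist(z,D)<1\}$, and neither of your justifications produces one. (i) The Fubini identity $\int_{D^c}J^D(r,x,0,z)\,dz=\int_D p_r^D(x,y)\,\nu(y,D^c)\,dy$ gives a bound on an integral, uniformly in $r$; that is not a dominating function (it would support a Vitali/uniform-integrability argument, not the DCT as you set it up). (ii) Your fallback bound $\int_D p_r(x,y)\delta_D(y)^{-\alpha}\,dy\lesssim\int_D\delta_D(y)^{-\alpha}\,dy$ fails because $\int_D\delta_D(y)^{-\alpha}\,dy=\infty$ for every $\alpha\ge 1$ (it converges iff $\alpha<1$, since $|\{\delta_D<\eps\}|\approx\eps$); replacing $p_r^D$ by the free kernel $p_r$ throws away exactly the boundary decay that makes such integrals finite. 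The paper avoids the whole issue by integrating in $z$ \emph{first}: setting $h(y)=\int_{D^c}\nu(y,z)g(z)\,dz$, the quantity of interest is $\int_D p_{t-s}^D(x,y)h(y)\,dy$, split into $y\in B(x,r)$ — where $h$ is bounded and continuous (this is where $g\in L^1(1\wedge\nu)$ and the local boundedness of $g$ near $\overline D$ enter), so the semigroup converges to $h(x)=\int_{D^c}\nu(x,z)g(z)\,dz$ — and $y\in D\setminus B(x,r)$, which is killed by the off-diagonal bound $p_{t-s}^D(x,y)\lesssim(t-s)^{1-\sigma}p_1^D(x,y)$ of \cite[Lemma~B.1]{HK}. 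If you insist on your $z$-side DCT, you must build the majorant honestly, e.g.\ $J^D(r,x,0,z)\lesssim\nu(x,z)+\int_{D\setminus B(x,\rho)}p_1^D(x,y)\nu(y,z)\,dy$ via the same off-diagonal bound, and then verify that the second term is integrable against $|g|$ over the boundary layer — which again needs the boundary decay of $p_1^D(x,\cdot)$, not of $p_1$.
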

\begin{proof}
	Since $\nu(\cdot,z)\in C_b^\infty(\overline{D})$ for fixed $z\in D^c$, the first convergence is clear. Let $g\in L^1({B_1^c}, 1\wedge \nu)$ and let $r$ be small so that $B(x,2r)\subset D$. By Fubini's theorem,
	\begin{align*}
		\int_{D^c} J^D(t,x,s,z) g(z)\, dz = \bigg(\int_{B(x,r)} \int_{D^c} +  \int_{D\setminus B(x,r)}\int_{D^c}\bigg) g(z) p_{t-s}^D(x,y)\nu (y,z)\, dy\, dz.  	\end{align*}
		The first integral converges to $\textstyle  \int_{D^c} \nu(x,z) g(z)\, dz$ by the dominated convergence theorem (note that $\nu(y,z)\approx \nu(x,z)$ on the integration set). The second integral converges to 0 by \cite[Lemma~B.1]{HK} and the dominated convergence theorem.
\end{proof}
\begin{lemma}
	The function $D^c\ni z\mapsto J^D(t,x,s,z)$ is integrable for any $s<t$ and $x\in D$. If $D$ is bounded and Lipschitz, then for fixed $x$ the integrals are uniformly bounded with respect to $s$ and $t$.
\end{lemma}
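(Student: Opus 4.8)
Throughout, write $\kappa_D(y):=\int_{D^c}\nu(y,z)\,dz$ for $y\in D$; since $B(y,\delta_D(y))\subset D$ a direct computation gives $\kappa_D(y)\le\int_{\{|z-y|\ge\delta_D(y)\}}\nu(y,z)\,dz=c\,\delta_D(y)^{-\alpha}<\infty$. By Tonelli the integral in the statement depends only on $v:=t-s$ and equals
\[
F(v):=\int_{D^c}J^D(v,x,0,z)\,dz=\int_D p_v^D(x,y)\,\kappa_D(y)\,dy .
\]
Thus the claim reduces to showing $F(v)<\infty$ for every $v>0$, and $\sup_{v>0}F(v)<\infty$ when $D$ is Lipschitz. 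The only real difficulty is the weight $\kappa_D$: although it is bounded away from $\partial D$, near the boundary $\kappa_D(y)\approx\delta_D(y)^{-\alpha}$ fails to be integrable on $D$ once $\alpha\ge1$, so the crude bound $p_v^D(x,y)\le p_v(x,y)$ by itself is not enough and one must exploit the vanishing of $p_v^D(x,\cdot)$ near $\partial D$.

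To prove finiteness (for every bounded open $D$) I would first estimate $p_v^D$ by splitting the time in half: using Chapman--Kolmogorov, the bound $p_{v/2}^D(x,w)\le p_{v/2}(x,w)\le c\,v^{-d/\alpha}$ from \eqref{eq:Blum}, and the identity $\int_D p_{v/2}^D(w,y)\,dw=\mP^y(\tau_D>v/2)$,
\[
p_v^D(x,y)=\int_D p_{v/2}^D(x,w)\,p_{v/2}^D(w,y)\,dw\le c\,v^{-d/\alpha}\,\mP^y(\tau_D>v/2).
\]
Then $\mP^y(\tau_D>v/2)\le(2/v)\,\mE^y\tau_D$ by Markov's inequality, so $F(v)\le 2c\,v^{-d/\alpha-1}\int_D\mE^y\tau_D\,\kappa_D(y)\,dy$, and the key point is that this last integral is at most $|D|$. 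Indeed, writing $\mE^y\tau_D=\int_D G_D(y,w)\,dw$ with $G_D=\int_0^\infty p_u^D\,du$, Tonelli and the symmetry of $G_D$ give $\int_D\mE^y\tau_D\kappa_D(y)\,dy=\int_D G_D\kappa_D(w)\,dw$, and $G_D\kappa_D(w)=\mE^w\!\int_0^{\tau_D}\kappa_D(X_r)\,dr=\mP^w\big(X_{\tau_D-}\in D,\ X_{\tau_D}\in(\overline D)^c\big)\le1$ by the Ikeda--Watanabe formula \eqref{eq:IW} (before $\tau_D$ the process stays in $D$, so the only jump of $X$ out of $D$ is the exit jump). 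This yields $F(v)\le C(D)\,v^{-d/\alpha-1}<\infty$, proving the first assertion, and in particular $F(v)\to0$ as $v\to\infty$.

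For the uniform bound when $D$ is Lipschitz it remains to control $F$ near $v=0$. First, $F$ is continuous on $(0,\infty)$: on a compact $[a,b]\subset(0,\infty)$ the estimate above dominates $p_v^D(x,y)\kappa_D(y)$ by the $v$-independent function $c\,a^{-d/\alpha}\mP^y(\tau_D>a/2)\kappa_D(y)$, which is integrable on $D$ by the previous paragraph, so dominated convergence together with the continuity of $v\mapsto p_v^D(x,y)$ applies. Second --- and this is the one step I expect to be genuinely nontrivial --- Lemma~\ref{lem:JDdelta} applied with $g\equiv1$ (which lies in $L^1(1\wedge\nu)$ and is bounded) gives $F(v)\to\int_{D^c}\nu(x,z)\,dz=\kappa_D(x)<\infty$ as $v\to0^+$; this is precisely where the Lipschitz regularity enters, since, as noted above, for $\alpha\ge1$ the naive estimate $p_v^D\le p_v$ is useless here and one really needs the boundary decay of $p_v^D$ that Lemma~\ref{lem:JDdelta} packages. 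Hence $F$ extends continuously to $[0,\infty)$ with $F(0^+)=\kappa_D(x)$ and $\lim_{v\to\infty}F(v)=0$, so $F$ is bounded on $(0,\infty)$, which is the second assertion.
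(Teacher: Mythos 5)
Your proof is correct, but it takes a genuinely different route from the paper's. The paper's argument is short and citation-heavy: finiteness for almost every $s$ follows from the Ikeda--Watanabe bound $\int_{-\infty}^t\int_{D^c}J^D(t,x,s,z)\,dz\,ds\le 1$, upgraded to every $s$ by intrinsic ultracontractivity; the uniform bound is then split into $s\le t-1$ (a monotonicity-in-$s$ estimate from \cite[Proposition~4.3]{MR2569321}) and $s$ near $t$, where the Lipschitz assumption enters through \cite[Lemma~5.16]{HK}. You instead prove the quantitative bound $F(v)\lesssim v^{-d/\alpha-1}$ via Chapman--Kolmogorov, Markov's inequality, and the same Ikeda--Watanabe mass bound in the pointwise form $\int_D G_D(w,y)\kappa_D(y)\,dy\le 1$; this handles both finiteness for each $v>0$ and the large-$v$ regime in one stroke, and you close the gap at $v=0$ by continuity of $F$ on $(0,\infty)$ together with the limit $F(0^+)=\kappa_D(x)$ from Lemma~\ref{lem:JDdelta} applied to $g\equiv 1$ --- a clean compactness argument on $[0,\infty]$. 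Note that the Lipschitz hypothesis enters your proof in exactly the same place as in the paper's, namely the small-time regime, only packaged through Lemma~\ref{lem:JDdelta} rather than through \cite[Lemma~5.16]{HK}. Your version is more self-contained and yields the extra information $F(v)\to 0$ as $v\to\infty$. The only imprecision is cosmetic: you identify $\int_D G_D(w,y)\kappa_D(y)\,dy$ with $\mP^w\big(X_{\tau_D-}\in D,\ X_{\tau_D}\in(\overline D)^c\big)$, whereas the weight $\kappa_D$ integrates $\nu(y,\cdot)$ over $D^c$ rather than $(\overline D)^c$; since only the upper bound by $1$ is used, and the two sets differ by the Lebesgue-null boundary for the domains considered here, this is harmless.
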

\begin{proof}
	By \eqref{eq:IW}, for any open $D$, $x\in D$, and $t\in \mR$ we have
	\begin{align*}
		\int_{\-\infty}^t \int_{D^c} J^D(t,x,s,z)\, dz\, ds \leq 1.
	\end{align*}
	In particular, the integrals over $D^c$ are finite for almost all $s$ and the intrinsic ultracontractivity \cite{MR1643611} implies that they are finite for all $s<t$.
	
	The uniform bound for $s\leq t-1$ and arbitrary bounded open $D$ follows from, e.g., \cite[Proposition~4.3]{MR2569321} which implies that
	\begin{align*} \int_{D^c} J^D(t,x,s,z)\, dz \lesssim \int_{D^c} J^D(t,x,t-1,z)\, dz.\end{align*}  If $D$ is bounded and Lipschitz and $x$ is fixed, then the same bound for $s$ close to $t$ is a consequence of \cite[Lemma~5.16]{HK}.
\end{proof}
\begin{lemma}\label{lem:ptdu0}
	Assume that $D$ is bounded and Lipschitz and let $u_0\colon D\to [0,\infty)$ be Borel. If there exist $t_0>0$ and $x_0\in D$ such that $P_{t_0}^D u_0(x_0)$ is finite, then
	\begin{enumerate}
		\item $P_t^Du_0(x)$ is finite for all $t>0$ and $x\in D$,
		\item $\int_D \mP^y(\tau_D>1)u_0(y)\, dy < \infty$, in particular $u_0\in L^1_{\rm loc}(D)$,
		\item $P_t^D u_0\in C_b(D)\cap C^{\infty}(D)$ for all $t>0$,
		\item The map $(t,x) \mapsto P_t^D u_0(x)$ is in $L^1((0,T)\times D)$ for any $T>0$.
	\end{enumerate}
\end{lemma}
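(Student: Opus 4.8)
\emph{Proof plan.} The whole lemma reduces to the single assertion that $\int_D \mP^y(\tau_D>t)\,u_0(y)\,dy<\infty$ for every $t>0$; call this $(\star)$. To obtain $(\star)$ I would first run the approximate factorization \eqref{eq:factorization} at the given point $x_0$: since $D$ is bounded we have $|x_0-y|\le\diam D$ for $y\in D$, so \eqref{eq:Blum} gives $p_{t_0}(x_0,y)\approx 1$ with constants depending only on $t_0$ and $D$, and $\mP^{x_0}(\tau_D>t_0)\in(0,1]$ is a fixed positive number because $x_0$ is interior. Hence \eqref{eq:factorization} yields $p_{t_0}^D(x_0,y)\approx\mP^y(\tau_D>t_0)$ on $D$, so $P_{t_0}^Du_0(x_0)$ is comparable to $\int_D\mP^y(\tau_D>t_0)u_0(y)\,dy$; by hypothesis the former is finite, hence so is the latter. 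For $t\ge t_0$, $(\star)$ then follows from the monotonicity of $t\mapsto\mP^y(\tau_D>t)$. For $0<t<t_0$ I would invoke the comparison $\mP^y(\tau_D>t)\le C(t,t_0,D)\,\mP^y(\tau_D>t_0)$ on $D$, valid on bounded Lipschitz sets with a finite constant for each fixed pair $0<t<t_0$; this is the one genuinely non-elementary input, being a standard consequence of intrinsic ultracontractivity \cite{MR1643611} (equivalently, $\mP^{\,\cdot}(\tau_D>t)$ is comparable, for $t$ in compact subsets of $(0,\infty)$, to a single profile on $D$; cf.\ \cite{HK}).

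Granting $(\star)$, assertion (2) is the case $t=1$, and since $\mP^y(\tau_D>1)$ is bounded below on every compact subset of $D$ this also gives $u_0\in L^1_{loc}(D)$. For (1), fix $t>0$ and $x\in D$, choose $r>0$ with $\overline{B(x,2r)}\subset D$, and split $P_t^Du_0(x)=\bigl(\int_{B(x,r)}+\int_{D\setminus B(x,r)}\bigr)p_t^D(x,y)u_0(y)\,dy$: on $B(x,r)$ use $p_t^D(x,y)\le p_t(x,y)\le t^{-d/\alpha}$ together with $u_0\in L^1_{loc}(D)$, while on $D\setminus B(x,r)$ use \eqref{eq:factorization} and \eqref{eq:Blum} to get $p_t^D(x,y)\lesssim t\,r^{-d-\alpha}\mP^y(\tau_D>t)$ and conclude by $(\star)$.

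For (3), I would first show $P_s^Du_0\in L^\infty(D)$ for every $s>0$: bounding $p_{s/2}^D(w,y)\lesssim s^{-d/\alpha}\mP^y(\tau_D>s/2)$ for all $w,y\in D$ via \eqref{eq:factorization}, \eqref{eq:Blum} and $\mP^w(\tau_D>s/2)\le 1$, Chapman--Kolmogorov gives $p_s^D(x,y)=\int_D p_{s/2}^D(x,w)p_{s/2}^D(w,y)\,dw\lesssim s^{-d/\alpha}\mP^y(\tau_D>s/2)$ uniformly in $x$, hence $\|P_s^Du_0\|_{L^\infty(D)}\lesssim s^{-d/\alpha}\int_D\mP^y(\tau_D>s/2)u_0(y)\,dy<\infty$ by $(\star)$. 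Then $P_t^Du_0=P_{t/2}^D\bigl(P_{t/2}^Du_0\bigr)$ with $P_{t/2}^Du_0\in L^\infty(D)$, and it remains to note that $P_{t/2}^D$ maps $L^\infty(D)$ into $C_b(D)\cap C^\infty(D)$: boundedness and continuity follow from $p_{t/2}^D(\cdot,y)\le p_{t/2}(\cdot,y)$, the joint continuity of $p_{t/2}^D$ on $D\times D$, and dominated convergence (splitting once more near and far from the base point), while the interior derivative bounds $\sup_{x\in K}|D_x^n p_{t/2}^D(x,y)|\le C(n,t,K)\bigl(1\wedge|x-y|^{-d-\alpha}\bigr)$ for compact $K\subset D$ — of the type \eqref{eq:derivest}, obtained from Hunt's formula \eqref{eq:Hunt} by peeling off the first exit from a slightly larger subdomain — let one differentiate under the integral sign. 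Finally, for (4), Tonelli's theorem and $\int_D p_t^D(x,y)\,dx=\mP^y(\tau_D>t)$ give $\int_0^T\!\!\int_D P_t^Du_0(x)\,dx\,dt=\int_D u_0(y)\,\mE^y[T\wedge\tau_D]\,dy$, and since $\mE^y[T\wedge\tau_D]\le\mE^y[\tau_D]\lesssim_D\mP^y(\tau_D>1)$ on bounded Lipschitz $D$ (integrate the comparison of the first paragraph over $t\in(0,1)$ and use monotonicity on $(1,\infty)$), the right-hand side is $\lesssim\int_D\mP^y(\tau_D>1)u_0(y)\,dy<\infty$ by (2).

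The main obstacle is thus isolated already in the first paragraph: the time-comparison of survival probabilities on bounded Lipschitz $D$, which is what upgrades the single hypothesis at $(t_0,x_0)$ to all times and is where the Lipschitz geometry (via intrinsic ultracontractivity) really enters. Everything else is bookkeeping with the heat-kernel estimates recalled above, the only other slightly technical point being the classical interior regularity of $p_t^D$ used for the $C^\infty$ part of (3).
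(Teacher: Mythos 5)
Your argument follows essentially the same route as the paper: reduce everything to the finiteness of $\int_D \mP^y(\tau_D>t)u_0(y)\,dy$ via the factorization \eqref{eq:factorization} and the comparability of survival probabilities across times, then bootstrap through the semigroup plus Hunt's formula and \eqref{eq:derivest} for (3), and Tonelli for (4). One point needs tightening in (4): you integrate the comparison $\mP^y(\tau_D>t)\le C(t,1,D)\,\mP^y(\tau_D>1)$ over $t\in(0,1)$, but you only asserted that $C(t,1,D)$ is finite for each fixed $t$; intrinsic ultracontractivity alone does not control how $C(t,1,D)$ blows up as $t\to0^+$, so $\int_0^1 C(t,1,D)\,dt$ could a priori diverge. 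What is actually needed (and what the paper invokes via \cite[Lemma~B.2]{HK}) is the quantitative bound $C(t,1,D)\lesssim t^{-\sigma}$ for some $\sigma\in(0,1)$, which makes the constant integrable near $0$; with that input your computation of $\int_0^T\int_D P_t^Du_0$ goes through as written.
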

\begin{proof}
	For any $t>0$ and $x\in D$ we have $p_t^D(x,y) \approx p_{t_0}^D(x_0,y)$ uniformly in $y$, which proves (1). By \eqref{eq:factorization},
	\begin{align*}
		P_1^D u_0(x) \approx \mP^x(\tau_D>1)\int_D \mP^y(\tau_D>1)u_0(y)\, dy,\quad x\in D,
	\end{align*}
	which proves (2). Furthermore, for any $t>0$,
	\begin{align*}
		\int_D P_t^D u_0(x)\, dx &= \int_D \int_D p_t^D(x,y)u_0(y)\, dy\\
		&= \int_D\mP^y(\tau_D>t) u_0(y)\, dy < \infty,
	\end{align*}
	that is, $P_t^D u_0\in L^1(D)$ and by a bootstrap argument $P_t^D u_0 \in C_b(D)$. Furthermore, using \eqref{eq:Hunt} and \eqref{eq:derivest} we get $P_t^D u_0 \in C_b(D)\cap C^\infty(D)$ for all $t>0$. Finally, by Tonelli's theorem we have
	\begin{align*}
		\int_0^t \int_D P_s^D u_0(x)\, dx\, ds = \int_0^t \int_D \mP^y(\tau_D>t)u_0(y)\, dy.
	\end{align*}
	Therefore, by \cite[Lemma~B.2]{HK} there exists $\sigma\in (0,1)$ such that
	\begin{align*}
		\int_0^t \int_D P_s^D u_0(x)\, dx\, ds\lesssim t^{\sigma}\int_0^ts^{-\sigma} \int_D  \mP^y(\tau_D>t) u_0(y)\, dy\, ds <\infty,
	\end{align*}
	which ends the proof.
\end{proof}
\section{Auxiliary estimates on the kernels}\label{sec:estimates}
In the next result we give off-diagonal upper bounds for the derivatives of the Dirichlet heat kernel, which may be of independent interest. For $n=1$ the bound follows from Kulczycki and Ryznar \cite{MR3767143}. {In the sequel, we denote $B_r = B(0,r)$ for $r>0$.}
\begin{remark}
	In this section we work with $D=B_1$ in order to make the formulations and proofs clear. The arguments can be modified with almost no difficulty to work for bounded $C^{1,1}$ sets. At some point, one has to use the Poisson kernel estimates \cite{Chen1998} instead of the explicit formula.
\end{remark}
\begin{theorem}\label{th:DHKder}
	{For $n=0,1,2,\ldots$},
	\begin{align*}
		|D^n_xp_t^{B_1}(x,y)|\lesssim 1\wedge \frac{\delta_{B_1}(y)^{\alpha/2}}{\sqrt{t}},\quad  t\in(0,T),\ x\in B_{1/4},\  y\in B_1\setminus B_{1/2}.
	\end{align*}
	Furthermore, if $\lambda_1$ is the first Dirichlet eigenvalue for $(-\Delta)^{\alpha/2}$ in $B_1$, then, for $n=0,1,2,\ldots$,
	\begin{align}\label{eq:largeder}
		|D^n_x p_t^{B_1}(x,y)| \lesssim e^{-\lambda_1 t} \delta_{B_1}(y)^{\alpha/2},\quad  t\geq T,\ x\in B_{1/2},\  y\in B_1.
	\end{align}
\end{theorem}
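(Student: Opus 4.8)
The plan is to reduce the first inequality, for $x\in B_{1/4}$ and $y\in B\setminus B_{1/2}$, to the comparison bound $|D^n_x p_t^B(x,y)|\lesssim p_t^B(x,y)$ on $(0,T)$: indeed, for such $x,y$ one has $\delta_B(x)\ge 3/4$ and $|x-y|\ge 1/4$, hence $p_t(x,y)\lesssim 1$ for $t\in(0,T)$, so \eqref{eq:CKS} gives
\[
p_t^B(x,y)\lesssim\Big(1\wedge\frac{\delta_B(x)^{\alpha/2}}{\sqrt t}\Big)p_t(x,y)\Big(1\wedge\frac{\delta_B(y)^{\alpha/2}}{\sqrt t}\Big)\lesssim 1\wedge\frac{\delta_B(y)^{\alpha/2}}{\sqrt t},
\]
which is the statement for $n=0$. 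For $n=1$ the comparison bound is the Kulczycki--Ryznar gradient estimate \cite{MR3767143}, which for $x$ at distance $\gtrsim 1$ from $\partial B$ and $|x-y|\gtrsim 1$ collapses to exactly $|\nabla_x p_t^B(x,y)|\lesssim p_t^B(x,y)$.

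The estimates for $t$ bounded away from $0$ — that is, \eqref{eq:largeder} and the first bound for $t\in[t_0,T]$ with any fixed $t_0>0$ — follow from one semigroup step. Writing $p_t^B(x,y)=\int_B p^B_{t_0/2}(x,v)\,p^B_{t-t_0/2}(v,y)\,dv$ and using that $\overline{B_{1/2}}$ is compactly contained in $B$, so $C_n:=\sup_{x\in B_{1/2},\,v\in B}|D^n_x p^B_{t_0/2}(x,v)|<\infty$ by \eqref{eq:Hunt}, \eqref{eq:derivest} and \cite[Lemma~2.5]{HK}, we get $|D^n_x p_t^B(x,y)|\le C_n\,\mP^y(\tau_B>t-t_0/2)$. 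By \eqref{eq:largetimes} this is $\lesssim e^{-\lambda_1 t}\delta_B(y)^{\alpha/2}$ for $t\ge T$, and since $\mP^y(\tau_B>s)\approx 1\wedge\delta_B(y)^{\alpha/2}/\sqrt s$ it is $\lesssim\delta_B(y)^{\alpha/2}\lesssim 1\wedge\delta_B(y)^{\alpha/2}/\sqrt t$ for $t\in[t_0,T]$. So only the range $t\in(0,t_0)$, with $t_0$ small and $n\ge 2$, remains, and I would treat it by induction on $n$, using $n=0,1$ as the base cases.

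For the inductive step I would exploit the identity obtained from the strong Markov property at $\tau_{B_{1/2}}$ (equivalently, from \eqref{eq:IW} applied to $B_{1/2}$), valid since $p^B_{\bullet}(\cdot,y)$ is caloric in $[0,\infty)\times B_{1/2}$ with vanishing initial data and no singular boundary part ($y\notin B_{1/2}$):
\[
p_t^B(x,y)=\int_0^t\!\!\int_{B\setminus B_{1/2}} J^{B_{1/2}}(u,x,0,z)\,p^B_{t-u}(z,y)\,dz\,du,\qquad x\in B_{1/4}.
\]
Here $D^n_x$ acts only on $J^{B_{1/2}}(u,x,0,z)=P^{B_{1/2}}_u[\nu(\cdot,z)\mathbf 1_{B_{1/2}}](x)$, and the crucial point is that $\nu(\cdot,z)$ is an \emph{explicit} function which, on a fixed neighbourhood of $\overline{B_{1/4}}$, is smooth with $\|\nu(\cdot,z)\|_{C^n(B_{3/8})}\lesssim_n 1$ uniformly in $z\in B\setminus B_{1/2}$ (since $|v-z|\ge 1/8$ there). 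Fixing $\chi\in C_c^\infty(B_{3/8})$ with $\chi\equiv 1$ on $B_{5/16}$, I split $\nu(\cdot,z)\mathbf 1_{B_{1/2}}=\chi\,\nu(\cdot,z)+(1-\chi)\nu(\cdot,z)$. For the first, genuinely $C_c^\infty$, piece I compare $p^{B_{1/2}}_u$ with the free kernel via \eqref{eq:Hunt} and integrate the derivatives by parts onto $\chi\,\nu(\cdot,z)$, which yields $|D^n_x P^{B_{1/2}}_u[\chi\nu(\cdot,z)](x)|\lesssim\|D^n(\chi\nu(\cdot,z))\|_\infty + (\text{small-time error})\lesssim 1$ \emph{uniformly as $u\to 0^+$} — no $u^{-n/\alpha}$ loss, precisely because $\chi\nu(\cdot,z)$ is a known smooth function and $x$ sits at distance $\gtrsim 1$ from $\partial B_{1/2}$; the error term comes from a further application of \eqref{eq:IW} and is absorbed by a Gronwall-type iteration. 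The second piece is supported at distance $\gtrsim 1/16$ from $x$, so it produces off-diagonal $x$-derivatives of $p^{B_{1/2}}_u$, which I would again peel against the free kernel and feed the inductive hypothesis (orders $<n$) plus the same iteration. Inserting the resulting bound $|D^n_x J^{B_{1/2}}(u,x,0,z)|\lesssim 1$ (with the correct integrable blow-up as $z\to\partial B_{1/2}$) back into the displayed identity and using \eqref{eq:CKS} on $p^B_{t-u}(z,y)$, the $z$- and $u$-integrals converge and reproduce the factor $1\wedge\delta_B(y)^{\alpha/2}/\sqrt t$.

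The main obstacle is exactly this passage $u\to 0^+$ for higher derivatives: a naive parabolic interior estimate for $D^n_x$ costs a factor $u^{-n/\alpha}$, which fails to be integrable in $u$ once $n\ge\alpha$ and is fatal in the time integral. The remedy is to never differentiate a genuinely diagonal factor $n$ times — instead one strips off the free kernel $p_u$ at every stage, whose off-diagonal derivatives are explicitly $\lesssim u$ by the scaling $D^n p_u(\xi)=u^{-(d+n)/\alpha}(D^n p_1)(\xi/u^{1/\alpha})$ and the polynomial decay of the derivatives of $p_1$, leaving all remaining derivatives acting on smooth, off-diagonal, explicit data; the induction on $n$ must be kept honest by only ever using derivatives of order $<n$ together with the known $n=0,1$ cases. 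Verifying that all the "far" and boundary-layer contributions stay bounded by $1\wedge\delta_B(y)^{\alpha/2}/\sqrt t$ uniformly down to $t=0$ is where the bulk of the work lies. For a general bounded $C^{1,1}$ set one replaces the explicit $p_u$ and the explicit geometry of the ball by the Poisson kernel estimates of \cite{Chen1998}, as indicated in the remark preceding the statement.
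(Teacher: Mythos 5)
Your reductions for $n=0$ (via \eqref{eq:CKS}), for $n=1$ (via \cite{MR3767143}), and for $t$ bounded away from $0$ (one semigroup step plus \eqref{eq:largetimes}) are fine and the last of these is essentially what the paper does for \eqref{eq:largeder}. But for the core case $n\ge 2$, $t$ small, your route diverges from the paper's and contains a genuine gap. The paper simply differentiates Hunt's formula \eqref{eq:Hunt}: since $x\in B_{1/4}$ while $y\in B\setminus B_{1/2}$ and $X_{\tau_B}\in B^c$, \emph{all} $n$ derivatives land on off-diagonal free kernels, so $D^n_xp_t^B(x,y)=f(t,y)-\mE^y[f(t-\tau_B,X_{\tau_B});\tau_B<t]$ with $f(t,v)=D^n_xp_t(x,v)$ bounded and Lipschitz; no induction on $n$ is needed, and the entire difficulty is transferred to Lemma~\ref{lem:f}, which extracts the decay $1\wedge\delta_B(y)^{\alpha/2}/\sqrt t$ from such a ``swept'' difference. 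Your worry about a $u^{-n/\alpha}$ loss from differentiating a diagonal factor therefore never arises in the intended argument.

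The gap in your induction is the step you state parenthetically: the bound $|D^n_xJ^{B_{1/2}}(u,x,0,z)|\lesssim 1$ ``with the correct integrable blow-up as $z\to\partial B_{1/2}$''. The argument you sketch (integration by parts for the $\chi\nu(\cdot,z)$ piece, off-diagonal derivatives for the $(1-\chi)\nu(\cdot,z)$ piece) yields at best $|D^n_xJ^{B_{1/2}}(u,x,0,z)|\lesssim 1+\int_{B_{1/2}\setminus B_{5/16}}\nu(v,z)\,dv\approx 1+\delta_{B_{1/2}}(z)^{-\alpha}$, uniformly in $u$. That singularity is \emph{not} integrable over $z\in B\setminus B_{1/2}$ when $\alpha\ge 1$, and $p^B_{t-u}(z,y)$ does not vanish at $\partial B_{1/2}$, so your claim that ``the $z$- and $u$-integrals converge'' fails as stated. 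Improving $\delta_{B_{1/2}}(z)^{-\alpha}$ to the integrable $\delta_{B_{1/2}}(z)^{-\alpha/2}$ requires cancellation in $u$, i.e.\ a bound like $\int_0^t|D^n_xp^{B_{1/2}}_u(x,v)|\,du\lesssim\delta_{B_{1/2}}(v)^{\alpha/2}$ --- which is the very estimate being proved, with $B$ replaced by $B_{1/2}$, so the induction becomes circular (it is an induction on $n$, not on the domain). A second, unaddressed point: even granting the kernel bound, extracting the factor $1\wedge\delta_B(y)^{\alpha/2}/\sqrt t$ from $\int_0^t\cdots p^B_{t-u}(z,y)\,dz\,du$ is not automatic, because \eqref{eq:CKS} gives the factor $1\wedge\delta_B(y)^{\alpha/2}/\sqrt{t-u}$, which is \emph{larger} than the target for $u>0$; balancing this against decay in the remaining integrand is precisely the computation carried out for $I_2$ in the proof of Lemma~\ref{lem:f}. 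Finally, the ``Gronwall-type iteration'' absorbing the error terms is not specified. (One smaller caveat: for the $n=1$ base case you should check that the gradient estimate of \cite{MR3767143} really carries the off-diagonal improvement you invoke, rather than only the factor $(\delta_B(x)\wedge t^{1/\alpha}\wedge 1)^{-1}$, which degenerates as $t\to0^+$; the paper relies on the same citation, so this is not specific to your argument.)
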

We will prove Theorem~\ref{th:DHKder} using Hunt's formula and the following auxiliary result.
\begin{lemma}\label{lem:f}
	Assume that $f\colon (0,T)\times (\mR^d\setminus B_{1/2})\to \mR$ is bounded and Lipschitz. 
	Then
	\begin{align*}
		|f(t,y) - \mE^y[f(t-\tau_{B_1},X_{\tau_{B_1}});\, \tau_{B_1} < t]| \lesssim 1\wedge \frac{\delta_{B_1}(y)^{\alpha/2}}{\sqrt{t}},\quad t\in (0,T),\ y\in B_1\setminus B_{1/2}. 
	\end{align*}
\end{lemma}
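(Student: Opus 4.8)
The plan is to split the defect $f(t,y)-\mE^y[f(t-\tau_B,X_{\tau_B});\tau_B<t]$ into a \emph{survival term} and an \emph{oscillation term}. Since $\mP^y(\tau_B=t)=0$, writing $f(t,y)=f(t,y)\mathbf{1}_{\{\tau_B<t\}}+f(t,y)\mathbf{1}_{\{\tau_B\ge t\}}$ and taking $\mE^y$ gives
\begin{align*}
f(t,y)-\mE^y[f(t-\tau_B,X_{\tau_B});\tau_B<t]=\mE^y\big[f(t,y)-f(t-\tau_B,X_{\tau_B});\tau_B<t\big]+f(t,y)\,\mP^y(\tau_B>t).
\end{align*}
For the last term I would use that $f$ is bounded together with $\mP^y(\tau_B>t)\lesssim 1\wedge\delta_B(y)^{\alpha/2}/\sqrt t$, which follows by integrating \eqref{eq:CKS} in $w$ over $B$ and using $\int_B p_t(y,w)\,dw\le 1$ (here $t\in(0,T)$, so \eqref{eq:CKS} applies). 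For the oscillation term, the Lipschitz property and boundedness of $f$ give $|f(t,y)-f(t-\tau_B,X_{\tau_B})|\lesssim\tau_B+\big(1\wedge|y-X_{\tau_B}|\big)$ on $\{\tau_B<t\}$, and $\mE^y[\tau_B;\tau_B<t]\le\int_0^t\mP^y(\tau_B>u)\,du\lesssim\int_0^t\delta_B(y)^{\alpha/2}u^{-1/2}\,du\lesssim\delta_B(y)^{\alpha/2}\sqrt t$, which is $\lesssim 1\wedge\delta_B(y)^{\alpha/2}/\sqrt t$ because $t<T$ and $\delta_B(y)\le 1/2$. Hence the lemma reduces to proving
\begin{align}\label{eq:sketchkey}
\mE^y\big[1\wedge|y-X_{\tau_B}|;\tau_B<t\big]\lesssim 1\wedge\frac{\delta_B(y)^{\alpha/2}}{\sqrt t},\qquad t\in(0,T),\ y\in B\setminus B_{1/2}.
\end{align}

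For \eqref{eq:sketchkey}, if $\delta_B(y)^{\alpha/2}\ge\sqrt t$ the right-hand side equals $1$ while the left-hand side is at most $\mP^y(\tau_B<t)\le 1$, so I may assume $\delta_B(y)^{\alpha/2}<\sqrt t$; then, since $t<T$, it suffices to bound the left-hand side by $\delta_B(y)^{\alpha/2}$. Using the Ikeda--Watanabe formula \eqref{eq:IW} I would write
\begin{align*}
\mE^y\big[1\wedge|y-X_{\tau_B}|;\tau_B<t\big]=\int_0^t\int_B\int_{B^c}\big(1\wedge|y-z|\big)\,p_u^B(y,w)\,\nu(w,z)\,dz\,dw\,du,
\end{align*}
and split $B^c$ into the part near $y$, namely $B^c\cap B(y,\tfrac14)$, and the rest. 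On $B^c\cap B(y,\tfrac14)$ we have $1\wedge|y-z|=|y-z|$, and this set is nonempty only when $\delta_B(y)<\tfrac14$; there I would bound $\int_0^t\int_B p_u^B(y,w)\nu(w,z)\,dw\,du\le\int_0^\infty\int_B p_u^B(y,w)\nu(w,z)\,dw\,du=:P_B(y,z)$, which by \eqref{eq:IW} is the density of $X_{\tau_B}$ under $\mP^y$ and is given explicitly by $P_B(y,z)\approx\big((1-|y|^2)/(|z|^2-1)\big)^{\alpha/2}|z-y|^{-d}$ (see \cite{MR0119247}). Since $1-|y|^2\approx\delta_B(y)$ and $|z|^2-1\approx\dist(z,\partial B)$ on this set, the contribution is $\lesssim\delta_B(y)^{\alpha/2}\int_{B^c\cap B(y,1/4)}\dist(z,\partial B)^{-\alpha/2}|z-y|^{1-d}\,dz$, and a direct computation in boundary-fitted coordinates bounds the last integral by a constant — the point being that $\alpha/2<1$ makes $\dist(z,\partial B)^{-\alpha/2}$ integrable transversally to $\partial B$, while $|z-y|^{1-d}$ costs at most a logarithm.

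On $B^c\setminus B(y,\tfrac14)$ I would use $1\wedge|y-z|\le 1$ and split the $w$-integral at $B(y,\tfrac18)$. For $w\in B(y,\tfrac18)$ one has $|w-z|\ge\tfrac18$, so $\int_{B^c}\nu(w,z)\,dz\lesssim 1$ and the contribution is $\lesssim\int_0^t\int_{B(y,1/8)}p_u^B(y,w)\,dw\,du\le\int_0^t\mP^y(\tau_B>u)\,du\lesssim\delta_B(y)^{\alpha/2}\sqrt t$. For $w\in B\setminus B(y,\tfrac18)$ the bound $|y-w|\ge\tfrac18$ and \eqref{eq:Blum} give $p_u(y,w)\lesssim u$, so \eqref{eq:CKS} yields
\begin{align*}
p_u^B(y,w)\lesssim\Big(1\wedge\frac{\delta_B(y)^{\alpha/2}}{\sqrt u}\Big)\,u\,\Big(1\wedge\frac{\delta_B(w)^{\alpha/2}}{\sqrt u}\Big)\le\delta_B(y)^{\alpha/2}\sqrt u\,\Big(1\wedge\frac{\delta_B(w)^{\alpha/2}}{\sqrt u}\Big),
\end{align*}
and combining this with $\int_{B^c}\nu(w,z)\,dz\lesssim\delta_B(w)^{-\alpha}$ and $|\{w\in B:\delta_B(w)<s\}|\lesssim s$, the contribution is at most $\delta_B(y)^{\alpha/2}\int_0^t\sqrt u\,(1+u^{1/\alpha-1})\,du$ (with an extra logarithmic factor when $\alpha=1$); the $u$-integrand is $O(u^\gamma)$ for some $\gamma>-1$, so the integral over $(0,t)\subset(0,T)$ is finite and this part is also $\lesssim\delta_B(y)^{\alpha/2}$. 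Summing the three contributions gives \eqref{eq:sketchkey}, and hence the lemma.

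I expect the main obstacle to be precisely this last piece — trajectories that leave a neighborhood of $y$ and later approach $\partial B$ near a different boundary point, where $\nu(w,\cdot)$ is singular. Handling it cleanly forces one to retain \emph{both} boundary-decay factors of \eqref{eq:CKS}, i.e.\ to use that the $\alpha$-stable process is unlikely to be found deep near $\partial B$; dropping the $w$-factor makes the relevant integral divergent as soon as $\alpha\ge 1$. The near-$y$ piece is comparatively soft once the explicit Poisson kernel of the ball is invoked, and on a general bounded $C^{1,1}$ set one would replace it by the Poisson kernel estimates of \cite{Chen1998}, as the remark preceding the lemma anticipates.
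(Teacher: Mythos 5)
Your proposal is correct, and it follows the same skeleton as the paper's proof: split off the survival term $|f(t,y)|\,\mP^y(\tau_B\geq t)$, then use the Lipschitz property to reduce the oscillation term to a time-increment piece (controlled by $\tau_B$) and a space-increment piece (controlled by $1\wedge|y-X_{\tau_B}|$). The differences are in how the two pieces are finished, and one of them is a genuine simplification. For the time increment, the paper expands $\mE^y[\tau_B;\tau_B<t]$ via Ikeda--Watanabe into $\int_0^t s\int_B p_s^B(y,v)\delta_B(v)^{-\alpha}\,dv\,ds$ and runs its longest computation (the estimate of $I_2$, splitting at $\delta_B(v)\lessgtr s^{1/\alpha}$) to show the inner integral is $\lesssim 1/s$; your elementary bound $\mE^y[\tau_B;\tau_B<t]\leq\int_0^t\mP^y(\tau_B>u)\,du\lesssim\delta_B(y)^{\alpha/2}\sqrt t$ gets the same conclusion in one line and makes that entire computation unnecessary. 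For the space increment, the roles are reversed: the paper integrates out time over $(0,\infty)$ once and for all, bounding the whole term by $\int_{B^c}(1\wedge|y-z|)P_B(y,z)\,dz\lesssim\delta_B(y)^{\alpha/2}\int_{B^c}\delta_B(z)^{-\alpha/2}(|e_d-z|^{-d}\wedge|e_d-z|^{-d+1})\,dz$ and checking finiteness of the last integral (your "near" computation is exactly this, localized to $B(y,1/4)$), whereas you treat the far field $z\in B^c\setminus B(y,1/4)$ separately by a hands-on estimate with both boundary factors of \eqref{eq:CKS} — which works, but is precisely the type of computation your shortcut avoided for the time increment, and is subsumed in the paper by the single Poisson-kernel integral (whose integrand decays like $|z|^{-d-\alpha/2}$ at infinity, so no near/far split is needed). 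Net effect: your route is a valid and on balance slightly shorter proof; adopting the paper's global Poisson-kernel bound for the spatial piece together with your exit-time bound for the temporal piece would give the cleanest version. (Minor point: the explicit Poisson kernel of the ball should be attributed to the reference the paper cites for it, \cite{MR350027}, rather than to \cite{MR0119247}.)
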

\begin{proof}
	We have 
	\begin{align*}
			|f(t,y) - \mE^y[f(t-\tau_{B_1},X_{\tau_{B_1}});\, \tau_{B_1} < t]| \leq 	\mE^y[|f(t,y)- f(t-\tau_{B_1},X_{\tau_{B_1}})|;\, \tau_{B_1} < t] + |f(t,y)|\mP^y(\tau_{B_1}\geq t).
	\end{align*}
	Since $f$ is bounded, the estimate for the second term follows from bounds for the survival probability, so it remains to investigate the first term. Since $f$ is Lipschitz and bounded,
	\begin{align*}
			\mE^y[|f(t,y)- f(t-\tau_{B_1},X_{\tau_{B_1}})|;\, \tau_{B_1} < t]  &= \int_0^t \int_{B_1^c}\int_{B_1} |f(t,y) - f(t-s,z)| p_{s}^{B_1}(y,v)\nu(v,z)\, dv\, dz\, ds\\
			&\lesssim  \int_0^t \int_{B_1^c}\int_{B_1} (1\wedge |y-z|)  p_{s}^{B_1}(y,v)\nu(v,z)\, dv\, dz\, ds\\
			&+ \int_0^t \int_{B_1^c}\int_{B_1} s\,  p_{s}^{B_1}(y,v)\nu(v,z)\, dv\, dz\, ds =: I_1 + I_2.
	\end{align*}
	Furthermore, by the definition and the explicit formula of the Poisson kernel $P_{B_1}$ of the ball \cite{MR350027},
	\begin{align*}
		I_1 \leq  \int_{B_1^c} (1\wedge |y-z|) \int_{B_1} \int_0^\infty p_s^{B_1}(y,v)\, ds\, \nu(v,z)\, dv\, dz &= \int_{B_1^c} (1\wedge |y-z|) P_{B_1}(y,z)\, dz\\
		&\approx \int_{B_1^c} \frac{\delta_{B_1}^{\alpha/2}(y)}{\delta_{B_1}^{\alpha/2}(z)} |y-z|^{-d}(1\wedge |y-z|) \, dz\\
		&=  \delta_{B_1}^{\alpha/2}(y)\int_{B_1^c}\delta_{B_1}^{-\alpha/2}(z)(|y-z|^{-d}\wedge |y-z|^{-d+1}) \, dz\\
		&\lesssim  \delta_{B_1}^{\alpha/2}(y)\int_{B_1^c}\delta_{B_1}^{-\alpha/2}(z)(|e_d-z|^{-d}\wedge |e_d-z|^{-d+1}) \, dz,
	\end{align*}
	where $e_d = (0,\ldots,0,1)$. We will show that the remaining integral is finite. Note that there is no problem at infinity, as the integrand is of the order $|z|^{-d-\alpha/2}$. Furthermore, if we let $z = (z',z_d)$, where $z'\in \mR^{d-1}$, $z_d \in \mR$, and $K(0,r)$ be {the ball in $\mR^{d-1}$ centered at 0 with radius $r$}, then
	\begin{align*}
		\int_{B_2\setminus B_1}\delta_{B_1}^{-\alpha/2}(z)(|e_d-z|^{-d}\wedge |e_d-z|^{-d+1}) \, dz &\approx 	\int_{B_2\setminus B_1}\delta_{B_1}^{-\alpha/2}(z) |e_d-z|^{-d+1} \, dz\\
		&\approx \int_0^1\int_{K(0,1)} z_d^{-\alpha/2} (|z'|^2 + z_d^2)^{(-d+1)/2} \, dz'\, dz\\
		&\approx \int_0^1 z_d^{-\alpha/2} \int_{K(0,1/z_d)} (1 + |v'|^2)^{-d+1}\, dv'\\
		&\lesssim \int_0^1 z_d^{-\alpha/2}|\log z_d|\, dz_d <\infty.
	\end{align*}
	This proves that $I_1 \lesssim \delta_{B_1}^{\alpha/2}(y)$. We	 now estimate $I_2$. By Tonelli's theorem and \eqref{eq:CKS},
	\begin{align}\label{eq:Idwa}
		I_2 \approx \int_0^t s\int_{B_1} p_s^{B_1}(y,v)\delta^{-\alpha}_{B_1}(v)\, dv\, ds = \int_0^t s\bigg(1 \wedge \frac{\delta^{\alpha/2}_{B_1}(y)}{\sqrt{s}}\bigg)\int_{B_1} p_s(y,v)\bigg(1\wedge \frac{\delta^{\alpha/2}_{B_1}(v)}{\sqrt{s}}\bigg)\delta_{B_1}^{-\alpha}(v)\, dv\, ds.
	\end{align}
	We will now focus on the remaining integral over $B_1$.
	\begin{align}\nonumber
		&\int_{B_1} p_s(y,v)\bigg(1\wedge \frac{\delta^{\alpha/2}_{B_1}(v)}{\sqrt{s}}\bigg)\delta_{B_1}^{-\alpha}(v)\, dv\\ = &\frac 1{\sqrt{s}}\int_{B_1\cap\{\delta_{B_1}(v) \leq s^{1/\alpha}\}} p_s(y,v)\delta^{-\alpha/2}_{B_1}(v)\, dv + \int_{B_1\cap\{\delta_{B_1}(v) > s^{1/\alpha}\}}p_s(y,v)\delta_{B_1}^{-\alpha}(v)\, dv.\label{eq:deltasplit}
	\end{align}
	Note that 
	\begin{align*}
		\int_{B_1\cap\{\delta_{B_1}(v) > s^{1/\alpha}\}}p_s(y,v)\delta_{B_1}^{-\alpha}(v)\, dv \leq \frac 1s	\int_{B_1\cap\{\delta_{B_1}(v) > s^{1/\alpha}\}}p_s(y,v)\, dv \leq \frac 1s.
	\end{align*}
 	For the first integral in \eqref{eq:deltasplit} we use \eqref{eq:Blum}:
 	\begin{align*}
 	\frac 1{\sqrt{s}}\int_{B_1\cap\{\delta_{B_1}(v) \leq s^{1/\alpha}\}} p_s(y,v)\delta^{-\alpha/2}_{B_1}(v)\, dv &\approx 	\frac 1{\sqrt{s}}\int_{B_1\cap\{\delta_{B_1}(v) \leq s^{1/\alpha}\}} \bigg(s^{-d/\alpha}\wedge \frac{s}{|y-v|^{d+\alpha}}\bigg) \delta^{-\alpha/2}_{B_1}(v)\, dv\\
 	&= \sqrt{s} \int_{B_1\cap\{\delta_{B_1}(v) \leq s^{1/\alpha} \leq |y-v|\}} \frac{\delta^{-\alpha/2}_{B_1}(v)}{|y-v|^{d+\alpha}} \, dv\\
 	&+s^{-d/\alpha-1/2}\int_{B_1\cap\{|y-v|\vee \delta_{B_1}(v) \leq s^{1/\alpha}\}}  \delta^{-\alpha/2}_{B_1}(v)\, dv =: I_{21} + I_{22}.
 	\end{align*}
 	If we let $Q=y/|y|$, then
 	\begin{align*}
 		I_{22} \lesssim s^{-d/\alpha-1/2}\int_{B(Q,s^{1/\alpha})} \delta_{B_1}^{-\alpha/2}(v)\, dv \approx s^{-d/\alpha-1/2} s^{(d-1)/\alpha} \int_0^{s^{1/\alpha}} r^{-\alpha/2}\, dr \approx \frac 1s.
 	\end{align*}
 	Furthermore,
 	\begin{align*}
 		I_{21} &\lesssim \sqrt{s}\int_{\delta_{B_1}(v) < s^{1/\alpha} < |Q-v|} |Q-v|^{-d-\alpha}\delta_{B_1}^{-\alpha/2}(v)\, dv\\
 		&\approx \sqrt{s} \int_0^{s^{1/\alpha}}\int_{K(0,1)\setminus K(0,s^{1/\alpha})} |v|^{-d-\alpha} v_d^{-\alpha/2}\, dv' \, dv_d\\
 		&\approx \sqrt{s}\int_0^{s^{1/\alpha}} v_d^{-d-3\alpha/2}\int_{s^{1/\alpha}}^1 \Big(\Big(\frac{r}{v_d}\Big)^2 + 1\Big)^{-\frac{d+\alpha}2} r^{d-1}\, dr\, dv_d\\
 		&\leq \sqrt{s}\int_0^{s^{1/\alpha}} v_d^{-3\alpha/2}\int_{s^{1/\alpha}/v_d}^\infty (1+r^2)^{-\frac{d+\alpha}{2}} r^{d-1}\, dr\, dv_d\\
 		&\approx \frac 1{\sqrt{s}}\int_0^{s^{1/\alpha}} v_d^{-\alpha/2}\, dv_d \approx s^{1/\alpha-1}\lesssim \frac 1s.
 	\end{align*}
 	Coming back to \eqref{eq:deltasplit} and \eqref{eq:Idwa} we get $I_2\lesssim \delta_{B_1}^{\alpha/2}(y)$. This ends the proof.
\end{proof}
\begin{proof}[Proof of Theorem~\ref{th:DHKder}]
	Let $\beta$ be a multi-index with $|\beta| = n$. It suffices to differentiate Hunt's formula and take $f(t,y) = \partial_x^\beta p_t(x,y)$ in Lemma~\ref{lem:f}. Note that in our setting $|x-y|\geq 1/4$, so $f$ indeed satisfies the assumptions of the lemma.
	For the large time estimate, assume without loss of generality that $T=1$. By Hunt's formula and \eqref{eq:derivest} we get the following bound:
	\begin{align}\label{eq:p12}
		|D^n_x p_{1/2}^{B_1}(x,y)| \lesssim |D^n_x p_{1/2}(x,y)| + \mE^y[|D^n_x p_{1/2-\tau_{B_1}}(x,X_{\tau_{B_1}})|;\, \tau_{B_1}<1/2] \lesssim 1,\quad x\in B_{1/2},\ y\in B_1.
	\end{align}
	By this, \eqref{eq:largetimes}, and the Chapman--Kolmogorov equation:
	\begin{align*}
	|D^n_x p_t^{B_1}(x,y)| \leq \int_{B_1} |D^n_x p_{1/2}^{B_1}(x,v)| p_{t-1/2}^{B_1}(v,y)\, dv \lesssim e^{-\lambda_1 t} \delta_{B_1}(y)^{\alpha/2}.
	\end{align*}
	\end{proof}
\begin{theorem}\label{th:JBest}
		For any $s<t$, $x\in B_1$ and $z\in B_1^c$,
		\begin{align*}
			\partial_t J^{B_1}(t,x,s,z) = \Delta^{\alpha/2}_x J^{B_1}(t,x,s,z) = \int_{B_1} \partial_t p_{t-s}^{B_1}(x,y)\nu(y,z)\, dy = \int_{B_1}  \Delta^{\alpha/2}_x p_{t-s}^{B_1}(x,y)\nu(y,z)\, dy.
		\end{align*}
		Furthermore, if $x\in B_{1/2}$, then
		\begin{align}\label{eq:JBest}
			|\partial_t J^{B_1}(t,x,s,z)|\lesssim \begin{cases}\frac 1{t-s} + \frac{\delta_{B_1}(z)^{-\alpha/2}}{(t-s)^{1-(2-\alpha)/2\alpha}},\quad &0<t-s<T,\ z\in B_2\setminus B_1,\\
				\frac{|z|^{-d-\alpha}}{t-s},\quad &0<t-s<T,\ z\in B_2^c.\end{cases}
		\end{align}
		and
		\begin{align}\label{eq:JBlarge}
			|\partial_t J^{B_1}(t,x,s,z)|\lesssim \begin{cases}e^{-\lambda_1 t} \delta_{B_1}(z)^{-\alpha/2},\quad &t-s\geq T,\ z\in B_2\setminus B_1,\\
			e^{-\lambda_1 t} |z|^{-d-\alpha},\quad &t-s\geq T,\ z\in B_2^c.\end{cases}
		\end{align}
	\end{theorem}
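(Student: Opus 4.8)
The plan is to first show that the four expressions in the statement coincide, and then to bound $\partial_t J^B$ directly from $\partial_t J^B(t,x,s,z)=\int_B \partial_\tau p_\tau^B(x,y)\,\nu(y,z)\,dy$, $\tau:=t-s$, by splitting $B$ into an interior ``bulk'' and a thin ``boundary layer''. For the identities the only real point is a uniform‑in‑time domination: fix $\tau_0>0$; for $\tau$ near $\tau_0$ and $x\in B$ fixed, the map $x'\mapsto p_r^B(x',v)$ is $C^2$ on a neighborhood of $x$ with derivatives bounded uniformly in $v\in B$ and $r$ near $\tau_0/2$ (by \eqref{eq:Hunt} and \eqref{eq:derivest}) and $p_r^B(x',v)\le p_r(x',v)$ is bounded, so the pointwise identity $\partial_r p_r^B=\Delta^{\alpha/2}_x p_r^B$ gives $|\partial_r p_r^B(x,v)|\le C$ uniformly in $v$; writing $p_\tau^B(x,y)=\int_B p_{\tau-\tau_0/2}^B(x,v)p_{\tau_0/2}^B(v,y)\,dv$ we may differentiate under this integral and get $|\partial_\tau p_\tau^B(x,y)|\le C\,\mP^y(\tau_B>\tau_0/2)\le C$, uniformly in $y\in B$ and $\tau$ near $\tau_0$. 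As $\nu(\cdot,z)\in L^1(B)$ for $z\in B^c$, this justifies differentiation under $\int_B$ and, with Fubini (and the same bound applied to the local part of $\Delta^{\alpha/2}_x$), the exchange of $\Delta^{\alpha/2}_x$ with $\int_B$; combined with $\partial_t p_t^B=\Delta^{\alpha/2}_x p_t^B$, all four quantities agree.

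For \eqref{eq:JBest}, fix $r_0\in(1/2,1)$ and split $B=(B\cap B_{r_0})\cup(B\setminus B_{r_0})$. On the bulk, $\delta_B(y)\ge 1-r_0$, so in Hunt's formula $\partial_\tau p_\tau^B(x,y)=\partial_\tau p_\tau(x,y)-\mE^x[\partial_\tau p_{\tau-\tau_B}(X_{\tau_B},y);\tau_B<\tau]$, with no contribution from differentiating the upper time limit since $|X_{\tau_B}-y|\ge\delta_B(y)>0$ forces $p_\varepsilon(X_{\tau_B},y)\to 0$ while $\int_{B^c}J^B(\tau-\varepsilon,x,0,\cdot)$ stays bounded. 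Using $|\partial_\tau p_\tau(x,y)|\lesssim\tau^{-1}p_\tau(x,y)$ and, for the exit term, $|\partial_r p_r(v-w)|\lesssim r^{-1}p_r(v-w)\le|v-w|^{-d-\alpha}\lesssim(1-r_0)^{-d-\alpha}$, we obtain $|\partial_\tau p_\tau^B(x,y)|\lesssim\tau^{-1}p_\tau(x,y)+1$ on the bulk, hence $\int_{B\cap B_{r_0}}|\partial_\tau p_\tau^B(x,y)|\,dy\lesssim\tau^{-1}$. Since $\nu(y,z)\lesssim 1$ on $B\cap B_{r_0}$ for $z\in B_2\setminus B$ and $\nu(y,z)\approx|z|^{-d-\alpha}$ there for $z\in B_2^c$, the bulk produces exactly the terms $\frac1{t-s}$, resp.\ $\frac{|z|^{-d-\alpha}}{t-s}$, of \eqref{eq:JBest}.

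On the boundary layer, $x\in B_{1/2}$ is uniformly interior and $|x-y|$ is bounded below, so we estimate $\partial_\tau p_\tau^B(x,y)=\Delta^{\alpha/2}_x p_\tau^B(x,y)$ by its local part on a small ball $B(x,\rho_0)$ and its nonlocal tail. The local part is $\lesssim\sup_{B(x,\rho_0)}\|D^2_w p_\tau^B(w,y)\|\lesssim 1\wedge\delta_B(y)^{\alpha/2}/\sqrt\tau$ by (the proof of) Theorem~\ref{th:DHKder}, which applies verbatim with the interior base point taken in $B_{1/2+\rho_0}$ rather than $B_{1/4}$ since only $|w-y|$ bounded below is used; the tail is $\lesssim p_\tau^B(x,y)+|\nabla_x p_\tau^B(x,y)|+\int_{B\setminus B(x,\rho_0)}p_\tau^B(w,y)\nu(w-x)\,dw\lesssim (1\wedge\delta_B(y)^{\alpha/2}/\sqrt\tau)+\mP^y(\tau_B>\tau)$, using Theorem~\ref{th:DHKder} ($n=0,1$) and $\mP^y(\tau_B>\tau)=\int_B p_\tau^B(y,w)\,dw\lesssim 1\wedge\delta_B(y)^{\alpha/2}/\sqrt\tau$ from \eqref{eq:CKS}. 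Thus $|\partial_\tau p_\tau^B(x,y)|\lesssim 1\wedge\delta_B(y)^{\alpha/2}/\sqrt\tau$ on $B\setminus B_{r_0}$, so the layer contributes $\lesssim\int_{B\setminus B_{r_0}}(1\wedge\delta_B(y)^{\alpha/2}/\sqrt\tau)\,\nu(y,z)\,dy$. For $z\in B_2^c$ this is $\lesssim|z|^{-d-\alpha}(t-s)^{-1}$. For $z\in B_2\setminus B$ we compute in boundary coordinates around the point of $\partial B$ nearest to $z$: if $\delta_B(z)\le(t-s)^{1/\alpha}$ the integral is $\lesssim(t-s)^{-1}+\delta_B(z)^{-\alpha/2}(t-s)^{-1/2}$, and if $\delta_B(z)>(t-s)^{1/\alpha}$ it is $\lesssim\delta_B(z)^{-\alpha}\lesssim(t-s)^{-1}$; since $1-\tfrac{(2-\alpha)\alpha}{2}\ge\tfrac12$, all of this is absorbed by the right‑hand side of \eqref{eq:JBest}. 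The estimates \eqref{eq:JBlarge} follow by the same scheme, using \eqref{eq:largeder} and \eqref{eq:largetimes} in place of Theorem~\ref{th:DHKder} and \eqref{eq:CKS}: now $|\partial_\tau p_\tau^B(x,y)|\lesssim e^{-\lambda_1(t-s)}\delta_B(y)^{\alpha/2}$ for all $y\in B$, and the factor $\delta_B(z)^{-\alpha/2}$ (resp.\ $|z|^{-d-\alpha}$) comes from $\int_B\delta_B(y)^{\alpha/2}\nu(y,z)\,dy\lesssim\delta_B(z)^{-\alpha/2}$ (resp.\ $\lesssim|z|^{-d-\alpha}$).

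The main obstacle is the boundary‑layer bound on $\Delta^{\alpha/2}_x p_\tau^B(x,y)=\partial_\tau p_\tau^B(x,y)$ for interior $x$ and $y\to\partial B$: it requires meshing the off‑diagonal derivative estimates of Theorem~\ref{th:DHKder} (at the slightly enlarged interior base point) with the survival‑probability estimate for the nonlocal tail, and then a boundary integration precise enough to land inside the exponent $1-\tfrac{(2-\alpha)\alpha}{2}$, splitting on the position of $\delta_B(z)$ relative to the layer width $(t-s)^{1/\alpha}$. The other delicate, though routine, point is the justification of differentiation under the integral and of the $\Delta^{\alpha/2}_x$–Fubini exchange, which both hinge on the uniform‑in‑time bound on $\partial_\tau p_\tau^B$ established at the outset.
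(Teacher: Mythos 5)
Your proof takes a genuinely different route from the paper's. For \eqref{eq:JBest} the paper does not estimate $\partial_\tau p^B_\tau(x,y)$ pointwise at all: it applies the elementary pointwise bound \eqref{eq:c2bound} for $|\Delta^{\alpha/2}u(x)|$ directly to $u=J^B(t,\cdot,0,z)$, with the cutoff radius optimized at $r=t^{1/\alpha}$, and feeds in estimates for $J^B$, $\nabla_x J^B$, $D^2_xJ^B$ and the nonlocal tail. Your bulk/boundary-layer decomposition of $\int_B\partial_\tau p_\tau^B(x,y)\nu(y,z)\,dy$ is a legitimate alternative; the pointwise layer bound $|\Delta^{\alpha/2}_xp^B_\tau(x,y)|\lesssim 1\wedge\delta_B(y)^{\alpha/2}\tau^{-1/2}$ that you extract from Theorem~\ref{th:DHKder} (local part) plus the survival probability (tail) is correct and arguably cleaner, and your treatment of the identities and of \eqref{eq:JBlarge} matches the paper's in substance.

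There is, however, one concrete gap in the final absorption step. You quote the exponent in \eqref{eq:JBest} as $1-\tfrac{(2-\alpha)\alpha}{2}$ and use $1-\tfrac{(2-\alpha)\alpha}{2}\ge\tfrac12$ to absorb the term $\delta_B(z)^{-\alpha/2}(t-s)^{-1/2}$. The exponent in the statement is $1-\tfrac{2-\alpha}{2\alpha}=\tfrac32-\tfrac1\alpha$ (this is the reading consistent with the paper's proof, where the dominant contribution $\|\nabla u\|_\infty r^{1-\alpha}\sim\delta_B(z)^{-\alpha/2}t^{-1/2}\cdot t^{(1-\alpha)/\alpha}$ produces exactly $t^{-(3/2-1/\alpha)}$). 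For $\alpha\ge1$ one has $\tfrac32-\tfrac1\alpha\ge\tfrac12$ and your absorption goes through; but for $\alpha<1$ the exponent is strictly less than $\tfrac12$ (even negative for $\alpha<2/3$), and in the regime $\delta_B(z)\le(t-s)^{1/\alpha}$ your bound $\delta_B(z)^{-\alpha/2}(t-s)^{-1/2}$ dominates \emph{both} terms on the right-hand side of \eqref{eq:JBest} (it exceeds $(t-s)^{-1}$ precisely because $\delta_B(z)^{-\alpha/2}\ge(t-s)^{-1/2}$ there). So as written your argument establishes \eqref{eq:JBest} only for $\alpha\ge1$. You are in good company — the paper's own proof also produces the tail term $\delta_B(z)^{-\alpha/2}/\sqrt t$ and, for $\alpha\le1$, the term $\|u\|_{L^\infty}r^{-\alpha}\sim\delta_B(z)^{-\alpha/2}t^{-1}$, neither of which is dominated by the stated right-hand side when $\alpha<1$ — but your explicit justification rests on the wrong exponent, so you should either restrict to $\alpha\ge1$, weaken the claimed bound to $(t-s)^{-1}+\delta_B(z)^{-\alpha/2}(t-s)^{-1/2}$ (which still suffices for the application in Theorem~\ref{th:mvpclass}, since $\delta_B(z)^{-\alpha/2}$ is integrable over $B_2\setminus B$ and $(t-s)^{-1/2}$ is integrable in $s$), or supply an argument capturing the cancellation your pointwise estimates cannot see.
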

	\begin{proof} By Theorem~\ref{th:DHKder}, \eqref{eq:p12}, the fact that  $\Delta^{\alpha/2}_xp_t^{B_1}(x,y)$ is bounded when $t$ is separated from 0 and $p_t^{B_1}$ satisfies the fractional heat equation (see \cite[Corollary~2.6, Lemma~A.1]{HK}), the integrals commute with $\partial_t$ and $\Delta^{\alpha/2}_x$ by the dominated convergence theorem, therefore it suffices to establish \eqref{eq:JBest} and \eqref{eq:JBlarge}.
		
	In order to get \eqref{eq:JBest} we will use one of the following general estimates, depending on $\alpha$: for $0<r<1/2$ and $x\in B_{1/4}$,
	\begin{align}
		&|\Delta^{\alpha/2}u(x)| = \bigg|\int_{\mR^d} (u(y) - u(x) - \nabla u(x)(y-x)\textbf{1}_{B_{{1/2}}(x)}(y))\nu(y-x)\, dy\bigg| \nonumber\\ \label{eq:c2bound}
		&\lesssim \begin{cases} \|u\|_{L^\infty(B_{1/2})} + \int_{B_{1/2}^c} |u(y)|\nu(y)\, dy + (1 + r^{1-\alpha})\|\nabla u\|_{L^\infty(B_{1/2}\setminus B_r)} + \|D^2 u\|_{L^\infty(B_r)} r^{2-\alpha},\quad &\alpha\in (1,2),\\
		\|u\|_{L^\infty(B_{1/2})} r^{-1}+ \int_{B_{1/2}^c} |u(y)|\nu(y)\, dy + \|D^2 u\|_{L^\infty(B_r)} r,\quad &\alpha = 1,\\
		\|u\|_{L^\infty(B_{1/2})} r^{-\alpha} + \int_{B_{1/2}^c} |u(y)|\nu(y)\, dy + \|\nabla u\|_{L^\infty(B_r)} r^{1-\alpha},\quad &\alpha\in (0,1).
		\end{cases}
	\end{align}
	We take $u(x) = J^{B_1}(t,x,0,z)$ and we estimate the terms appearing in \eqref{eq:c2bound}. 
	
	\textbf{Case 1.} $z\in B_2\setminus B_1$. First, by \cite[Lemma~B.1]{HK}, for $x\in B_{1/2}$ we have
	\begin{align*}
		|u(x)| \leq \int_{B_1\setminus B_{3/4}} p_1^{B_1}(x,y)\nu(y,z)\, dy + \int_{B_{3/4}} p_t(x,y)\nu(y,z)\, dy \lesssim \delta_{B_1}(z)^{-\alpha/2}+ |z|^{-d-\alpha}
	\end{align*}
	and
	\begin{align*}
		\int_{B_{1/2}^c} |u(x)|\nu(x)\, dx &= \int_{B_{1/2}^c}\int_{B_1} p_t^{B_1}(x,y)\nu(y,z)\, dy\, \nu(x)\, dx\\ &\lesssim \int_{B_1} \bigg(1\wedge \frac{\delta_{B_1}(y)^{\alpha/2}}{\sqrt{t}}\bigg)\nu(y,z)\int_{B_1\setminus B_{1/2}} p_t(x,y)\, dx\, dy \lesssim  \frac{\delta_{B_1}(z)^{-\alpha/2}}{\sqrt{t}}.
	\end{align*}
	We note that by Theorem~\ref{th:DHKder} and Hunt's formula \eqref{eq:Hunt}, for any $x\in B_{1/2}$ and $t\in (0,1)$,
	\begin{align}\label{eq:DHKnabla}
		|\nabla_x p_t^{B_1}(x,y)| \lesssim \begin{cases}
			1\wedge \frac{\delta_{B_1}(y)^{\alpha/2}}{\sqrt{t}},\quad &y\in B_1\setminus B_{3/4},\\
			1 + |D_x p_t(x,y)|,\quad &y\in B_{3/4}.
		\end{cases}
	\end{align}
	Hence, by, e.g., \cite[(2.6)--(2.9)]{MR1881259} and the fact that $z\in B_2\setminus B_1$, for $x\in B_{1/2}$ we have
	\begin{align*}
		|\nabla u(x)| \lesssim \int_{B_{3/4}} (1 + |\nabla_x p_t(x,y)|)\, dy + \int_{B_1\setminus B_{3/4}} \bigg(1\wedge \frac{\delta_{B_1}(y)^{\alpha/2}}{\sqrt{t}}\bigg)\nu(y,z)\, dy \lesssim t^{-1/\alpha} + \frac{\delta_{B_1}(z)^{-\alpha/2}}{\sqrt{t}}.
	\end{align*}
	A similar argument leads to
		\begin{align*}
		|D^2 u(x)| \lesssim  t^{-2/\alpha} + \frac{\delta_{B_1}(z)^{-\alpha/2}}{\sqrt{t}},\quad x\in B_{1/2}
	\end{align*}
	Taking $r = t^{1/\alpha}$ in \eqref{eq:c2bound}, we obtain the estimate in the case $z\in B_2\setminus B_1$.
	
	\textbf{Case 2.} $z\in B_2^c$. Note that for $y\in B_1$ we have $\nu(y,z)\approx  |z|^{-d-\alpha}$. Thus,
	\begin{align*}
		|u(x)| &\lesssim |z|^{-d-\alpha}\int_{B_1} p_t^{B_1}(x,y)\, dy \approx |z|^{-d-\alpha},\quad x\in B_{1/2}\\
		\int_{B_{1/2}^c} |u(x)|\nu(x)\, dx &\lesssim |z|^{-d-\alpha} \int_{B_1\setminus B_{1/2}}  \int_{B_1} p_t^{B_1}(x,y)\nu(x) \, dy \, dx \approx |z|^{-d-\alpha}.
	\end{align*}
	Proceeding as in the previous case we also get that for $x\in B_{1/2}$,
	\begin{align*}
		|\nabla u(x)| &\lesssim t^{-1/\alpha} |z|^{-d-\alpha},\\
		|D^2u(x)| &\lesssim t^{-2/\alpha}|z|^{-d-\alpha}.
	\end{align*}
	Taking $r=t^{1/\alpha}$ in \eqref{eq:c2bound} we obtain \eqref{eq:JBest}.
	
	Estimate \eqref{eq:JBlarge} is a direct consequence of \eqref{eq:largeder}, which yields
	\begin{align*}
		|D^n_x J^{B_1}(t,s,x,z)| \lesssim \int_{B_1} |D^n_x p_{t-s}^{B_1}(x,y)|\nu(y,z)\, dy \lesssim e^{-\lambda_1 t} \int_{B_1} \delta_{B_1}(y)^{\alpha/2}\nu(y,z)\, dy.
	\end{align*}
	The last integral is bounded by (a multiple of) $\delta_{B_1}(z)^{-\alpha/2}$ for $z\in B_2\setminus B_1$ and $|z|^{-d-\alpha}$ for $|z|\geq 2$. This ends the proof.
	\end{proof}
\section{Caloric functions are distributional solutions}\label{sec:mvpdist}
\begin{theorem}\label{th:mvptodist}
	If $u$ is caloric in $[0,T)\times D$, then it is a distributional solution to \eqref{eq:FHE}.
\end{theorem}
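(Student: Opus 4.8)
The plan is to combine the representation \eqref{eq:urepr} from \cite[Theorem~6.3]{HK} with a term-by-term verification of the weak formulation \eqref{eq:weakform}. Write $u=u^{(1)}+u^{(2)}+u^{(3)}$, where $u^{(1)}(t,x)=P_t^Du_0(x)$, $u^{(2)}(t,x)=\int_0^t\int_{D^c}J^D(t,x,s,z)g(s,z)\,dz\,ds$ and $u^{(3)}(t,x)=\int_{[0,t)}\int_{\partial D}\eta_{t-s,Q}(x)\,\mu(dQds)$, all of which vanish on $D^c$. (I assume here that $D$ is bounded Lipschitz, so that \eqref{eq:urepr} is available; for a general bounded $D$ one localises via parabolic sweeping of $u$ onto a sub-cylinder $G\subset\subset(0,T)\times D$, which again expresses $u$ inside $G$ as a Dirichlet-semigroup term plus a lateral-Poisson term, and the argument below is unchanged.) Fix $\phi\in C_c^\infty([0,T)\times D)$; its spatial support lies in some compact $K\subset D$ and its time support in $[0,b]$ with $b<T$, and $\Delta^{\alpha/2}\phi(\tau,\cdot)$ is bounded on $\mR^d$, is $O(|x|^{-d-\alpha})$ at infinity, and for every $z\notin K$ (in particular $z\in D^c$) satisfies $\Delta^{\alpha/2}\phi(\tau,z)=\int_D\phi(\tau,y)\nu(y,z)\,dy$, since $\phi(\tau,z)=0$ and $\nabla\phi(\tau,z)=0$ there. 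The first step is to record the integrability needed below: $u^{(i)}\in L^1((0,b)\times D)$ for $i=1,2,3$ (Lemma~\ref{lem:ptdu0}(4) for $i=1$; for $i=2,3$ this follows by Tonelli from $\sup_y\mE^y\tau_D<\infty$, the integrability of the exterior data and of the parabolic Martin kernel coming from Section~\ref{sec:estimates} and \cite{HK}, and $\mu([0,b]\times\partial D)<\infty$), together with $u(\tau,\cdot)\in L^1(1\wedge\nu)$ for $\tau\in(0,T)$; combined with the bound on $\Delta^{\alpha/2}\phi$ this makes all integrals in \eqref{eq:weakform} absolutely convergent.

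The heart of the argument is the following observation: for the remaining variables frozen, each of $p_\tau^D(\cdot,y_0)$ ($y_0\in D$), $J^D(\tau,\cdot,r,z_0)$ ($z_0\in D^c$) and $\eta_{\tau-r,Q}(\cdot)$ ($Q\in\partial D$) is a classical solution of $\partial_\tau=\Delta_x^{\alpha/2}$ in the interior of its time interval ($(0,T)\times D$, $(r,T)\times D$, $(r,T)\times D$ respectively), vanishing on $D^c$: for $p^D$ this is \cite[Lemma~2.5]{HK}, for $J^D$ it is Theorem~\ref{th:JBest} and its $C^{1,1}$ extension, and $\eta$ inherits it by passing to the limit in its defining quotient using the interior estimates of \cite{HK}. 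Consequently, for any such ``elementary solution'' $\theta$, the function $\psi_\theta(\tau):=\int_D\phi(\tau,x)\theta(\tau,x)\,dx$ is continuously differentiable on the interior of its time interval with $\psi_\theta'(\tau)=\int_{\mR^d}(\partial_\tau+\Delta^{\alpha/2})\phi(\tau,x)\theta(\tau,x)\,dx$: one differentiates under the integral (legitimate since $\phi(\tau,\cdot)\in C_c^\infty(D)$ and $\theta$ and $\partial_\tau\theta=\Delta_x^{\alpha/2}\theta$ are locally bounded away from the initial time), replaces $\int_D\phi\,\Delta_x^{\alpha/2}\theta$ by $\int_D\theta\,\Delta^{\alpha/2}\phi$ by self-adjointness of $\Delta^{\alpha/2}$ against a test function (cf.\ \cite[Lemma~3.3]{MR1671973}), and uses $\theta=0$ on $D^c$. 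Integrating over $\tau\in(t_1,t_2)$ and inserting the limit of $\psi_\theta$ at the initial time $r$ --- which is $\phi(0,y_0)$ for $\theta=p_\cdot^D(\cdot,y_0)$ (here $r=0$), $\int_D\phi(r,x)\nu(x,z_0)\,dx$ for $\theta=J^D(\cdot,\cdot,r,z_0)$ (this uses Lemma~\ref{lem:JDdelta} together with dominated convergence), and $0$ for $\theta=\eta_{\cdot-r,Q}$ (because $\eta_{\sigma,Q}\to0$ uniformly on $K$ as $\sigma\to0^+$, with a power rate) --- gives an identity expressing $\int_{t_1}^{t_2}\int_{\mR^d}(\partial_\tau+\Delta^{\alpha/2})\phi\cdot\theta$ as the difference of the endpoint pairings of $\psi_\theta$, with the initial pairing inserted whenever $t_1\le r$.

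Next I would integrate these three identities against $u_0(y_0)\,dy_0$ over $D$, against $g(r,z_0)\,dz_0\,dr$ over $(0,t_2)\times D^c$, and against $\mu(dQds)$ over $[0,t_2)\times\partial D$ respectively --- Tonelli and Fubini being justified by the integrability recorded in the first paragraph --- and recognise the resulting inner integrals as $u^{(1)},u^{(2)},u^{(3)}$. For $i=1$ and $i=3$ this directly yields $\int_{t_1}^{t_2}\int_{\mR^d}(\partial_\tau+\Delta^{\alpha/2})\phi\cdot u^{(i)}=\int_D\phi(t_2)u^{(i)}(t_2)-\int_D\phi(t_1)u^{(i)}(t_1)$, with the conventions $u^{(1)}(0,\cdot)=u_0$ and $u^{(3)}(0,\cdot)=0$ dictated by \eqref{eq:urepr} at $t=0$. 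For $i=2$ the contributions with $r\in[t_1,t_2)$ produce, after using $\int_D\phi(r,x)\nu(x,z)\,dx=\Delta^{\alpha/2}\phi(r,z)$ for $z\in D^c$, an extra term $-\int_{t_1}^{t_2}\int_{D^c}\Delta^{\alpha/2}\phi(r,z)g(r,z)\,dz\,dr$ on the right-hand side. Summing the three identities and using $u=u^{(1)}+u^{(2)}+u^{(3)}$ on $D$, $u=g$ on $D^c$ and $u^{(i)}=0$ on $D^c$, the right-hand side of \eqref{eq:weakform} equals $\sum_i\int_{t_1}^{t_2}\int_{\mR^d}(\partial_\tau+\Delta^{\alpha/2})\phi\cdot u^{(i)}+\int_{t_1}^{t_2}\int_{D^c}\Delta^{\alpha/2}\phi\cdot g$, and the last integral is precisely cancelled by the extra $i=2$ term, leaving $\int_D\phi(t_2,x)u(t_2,x)\,dx-\int_D\phi(t_1,x)u(t_1,x)\,dx$; this is \eqref{eq:weakform} with $s=t_1$, $t=t_2$.

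The steps I expect to be most delicate are the two initial-time limits and the integrability bookkeeping: establishing that $\eta_{\sigma,Q}\to0$ uniformly on compact subsets of $D$ as $\sigma\to0^+$ with a rate $\sigma^{1-\kappa}$ for some $\kappa<1$ (so that it also survives integration in $\tau$ against $\mu$), producing the dominating function for $J^D(\tau,\cdot,r,z)\to\nu(\cdot,z)$, and verifying that the finiteness of the integrals in \eqref{eq:urepr} forces exactly the integrability of $u_0,g,\mu$ (in particular of $g$ near $\partial D$, against a weight of order $\delta(z)^{-\alpha/2}$) needed for the interchanges of integration and for absolute convergence in \eqref{eq:weakform}; these rely on \cite{HK} and the estimates of Section~\ref{sec:estimates}. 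Everything else is elementary integration by parts in $t$ and Fubini's theorem.
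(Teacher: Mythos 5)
Your proposal is correct in outline, but it takes a genuinely different route from the paper's, and the difference is instructive. You verify \eqref{eq:weakform} term by term in the decomposition \eqref{eq:urepr}, which forces you to confront the parabolic Martin kernel directly: you need that $(t,x)\mapsto\eta_{t-s,Q}(x)$ is a pointwise solution of the equation in $D$ (which you assert via a limit in the defining quotient), that $\eta_{\sigma,Q}\to 0$ on compact subsets of $D$ with a quantitative rate $\sigma^{1-\kappa}$ so that the initial pairing survives integration against $\mu$, and that $g$ is integrable against $J^D(\cdot,\cdot,\cdot,z)$ all the way up to $\partial D$, where the kernel carries a $\delta_D(z)^{-\alpha/2}$ singularity. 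All three facts are true and within reach of \cite{HK} and Section~\ref{sec:estimates}, but none of them appears in the paper's proof. The paper first disposes of $P_t^Du_0$ (its \eqref{eq:hto0} is precisely your delicate initial-time limit for the $u^{(1)}$ term, proved there via a dominating-function argument using \cite[Lemma~B.1]{HK}), and then, with $u_0\equiv 0$, re-expands $u$ by parabolic sweeping onto a set $U$ with $\supp\,\phi(\tau,\cdot)\subset\subset U\subset\subset D$. In that representation there is a single lateral-Poisson term $\int_0^t\int_{U^c}J^U(t,x,s,z)u(s,z)\,dz\,ds$: the Martin-kernel contribution is absorbed into the ``exterior data'' $u|_{U^c}$ on the annulus $D\setminus U$, and all kernels are evaluated away from $\partial U$, where they are smooth and bounded. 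The subsequent integration by parts in time is essentially your $\psi_\theta$ identity (the paper's $I_1$), and its $I_2$ is your extra $\Delta^{\alpha/2}\phi$ term on the complement, whose cancellation bookkeeping you carry out correctly. In short, your route buys an explicit, term-by-term verification of the weak formulation for each piece of \eqref{eq:urepr}, at the price of two nontrivial ingredients about $\eta$ and about $J^D$ near $\partial D$ that you state rather than prove; the paper's sweeping trick makes both unnecessary. If you want to keep your decomposition, the cleanest fix for the $\eta$-term is to note that $\eta_{\cdot,Q}$ is itself caloric and apply the sweeping reduction to it, which brings you back to the paper's argument.
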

\begin{proof}
	Since for any $U\subset\subset D$ there exists a bounded Lipschitz set $V$ such that $U\subset V\subset\subset D$, we can assume that $D$ is Lipschitz. Then $u$ has the representation \eqref{eq:urepr}. We first show that $P_t^Du_0$ is a distributional solution and then proceed with the remaining parts assuming that $u_0=0$.
	
	We first note that since the expectation in \eqref{eq:para} is finite, by the monotone convergence we have
	\begin{align*}
		P_t^Du_0(x) = \sup\limits_{U\subset\subset D} P_t^U u_0(x) < \infty,\quad x\in D.
	\end{align*}	
	Furthermore, by Lemma~\ref{lem:ptdu0}, $u_0\in L^1_{\rm loc}(D)$. Let $\phi\in C_c^\infty([0,T)\times D)$. We claim that 
	\begin{align}\label{eq:hto0}
	\lim\limits_{h\to 0^+} \int_D \phi(h,x) P_h^D u_0(x)\, dx = \int_D \phi(0,x)u_0(x)\, dx.
	\end{align}
	Indeed, if we fix $U\subset\subset D$ such that $\textstyle \bigcup\nolimits_{t\in [0,T)}\supp\, \phi(t,\cdot)\subset \subset U$, then we get
	\begin{align*}
		&\bigg|\int_D \phi(h,x) P_h^D u_0(x)\, dx - \int_D \phi(0,x) P_h^D u_0(x)\, dx\bigg|\\
		\leq &\int_U |\phi(h,x) - \phi(0,x)| P_h^D u_0(x)\, dx
		\leq h \int_{U} P_h^D u_0(x)\, dx,
	\end{align*}
	which by Lemma~\ref{lem:ptdu0} and \cite[Lemma~B.2]{HK} converges to 0 as $h\to 0^+$. Therefore, it suffices to show that
	\begin{align}\label{eq:Usemigroup}
		\lim\limits_{h\to 0^+}\int_U |P_h^D u_0(x) - u_0(x)|\, dx = 0.
	\end{align}
	We estimate as follows:
	\begin{align*}
		\int_U |P_h^D u_0(x) - u_0(x)|\, dx \leq \int_U |P_h^D [u_0\cdot\textbf{1}_U](x) - u_0(x)|\, dx + \int_U P_h^D [u_0\cdot\textbf{1}_{U^c}](x)\, dx.
	\end{align*}
	The first integral converges to 0 as $h\to 0^+$ because $u_0\cdot \textbf{1}_U\in L^1(D)$. For the second integral consider an open set $U'$ such that $U\subset\subset U'\subset\subset D$. By \cite[Lemma~B.1]{HK} there exists $\sigma\in (0,1)$ such that $p_h^D(x,y) \lesssim h^{1-\sigma} p_1^D(x,y)$ for $x\in U$ and $y\in D\setminus U'$. Therefore,
	\begin{align*}
		\int_U P_h^D [u_0\cdot\textbf{1}_{U^c}](x)\, dx &= \int_U \int_{D\setminus U'} p_h^D(x,y)\, u_0(y)\, dy\, dx + \int_U \int_{U'\setminus U} p_h^D(x,y)\, u_0(y)\, dy\, dx\\
		&\lesssim h^{1-\sigma} \int_{U}\int_{D\setminus U'} p_1^D(x,y)u_0(y)\, dy + \int_{U}\int_{U'\setminus U} p_h^D(x,y)u_0(y)\, dy. 
	\end{align*}
	Both terms converge to $0$ as $h\to 0^+$; in the second we use the dominated convergence theorem and the fact that $u_0\in L^1(U')$. This proves \eqref{eq:Usemigroup} and thus \eqref{eq:hto0} follows.
	
	By \eqref{eq:hto0} we get
	\begin{align*}
		\int_{D} \phi(t,x) P_t^D u_0(x)\, dx - \int_D \phi(0,x)u_0(x)\, dx = \lim\limits_{h\to 0^+} \bigg(\int_{D} \phi(t,x) P_t^D u_0(x)\, dx - \int_{D} \phi(h,x) P_h^Du_0(x)\, dx\bigg).
	\end{align*}
	We have
	\begin{align*}&\int_{D} \phi(t,x) P_t^D u_0(x)\, dx - \int_{D} \phi(h,x) P_h^Du_0(x)\, dx\\ =\, &\int_D \int_h^t \partial_\tau (\phi(\tau,x) P_\tau^D u_0(x))\, d\tau\, dx\\
		=\, &\int_h^t \int_D \partial_\tau \phi(\tau,x) P_\tau^D u_0(x)\, dx\, d\tau + \int_h^t \int_D \phi(\tau,x) \partial_\tau P_\tau^D u_0(x)\, dx\, d\tau.
	\end{align*}
	The equalities above make sense because $P_\tau^D u_0\in L^2(D)$ for all $\tau>0$, so by, e.g., \cite[Section~2.3]{HK} we have $\partial_\tau P_\tau^D u_0 = \partial_\tau P_{\tau/2}^D P_{\tau/2}^D u_0 = \Delta^{\alpha/2}P_\tau^D u_0.$  Furthermore, since $P_{\tau/2}^Du_0\in L^2(D)$ and $\phi(\tau,\cdot)\in C_c^\infty(D)$, we get
	\begin{align*}
	&\int_h^t \int_D \partial_\tau \phi(\tau,x) P_\tau^D u_0(x)\, dx\, d\tau + \int_h^t \int_D \phi(\tau,x) \partial_\tau P_\tau^D u_0(x)\, dx\, d\tau\\
	=\, &\int_h^t \int_D \partial_\tau \phi(\tau,x) P_\tau^D u_0(x)\, dx\, d\tau + \int_h^t \int_D \Delta^{\alpha/2}\phi(\tau,x) P_\tau^D u_0(x)\, dx\, d\tau.
	\end{align*}
	By the fact that $(\partial_\tau  + \Delta^{\alpha/2})\phi$ is bounded, by Lemma~\ref{lem:ptdu0}, and the dominated convergence theorem we obtain
	\begin{align*}
		\int_D\phi(t,x) P_t^D u_0(x)\, dx - \int_D \phi(0,x) u_0(x)\, dx = \int_0^t\int_D (\partial_{\tau} +\Delta^{\alpha/2})\phi(\tau,x) P_\tau^D u_0(x)\, dx.
	\end{align*}
	Therefore, in the remainder of the proof we can and do assume that $u_0 \equiv 0$, i.e., we will show \eqref{eq:weakform} for caloric $u$ which satisfies
	\begin{align*}
		u(t,x) = \int_0^t \int_{U^c} J^U(t,x,s,z) u(s,z)\, dz\, ds,\quad U\subset\subset D,\ x\in U,\ t\in (0,T).
	\end{align*}
	For $u$ as above, by Fubini--Tonelli (see also \cite[Proposition~5.15]{HK}) we have
	\begin{align*}
		\int_D \phi(t,x) u(t,x)\, dx &= \int_D \int_0^t \int_{U^c}\int_U \phi(t,x) p_{t-s}^U(x,y)\nu(y,z) u(s,z)\, dy\, dz\, ds\, dx\\
		&=\int_0^t \int_{U^c}\int_U P_{t-s}^U \phi(t)(y)\nu(y,z) u(s,z)\, dy\, dz\, ds\\
		&= \int_0^t \int_{U^c}\int_U (P_{t-s}^U \phi(t)(y) - \phi(s,y)) \nu(y,z) u(s,z)\, dy\, dz\, ds\\
		&+ \int_0^t \int_{U^c}\int_U \phi(s,y) \nu(y,z) u(s,z)\, dy\, dz\, ds =: I_1 + I_2.
	\end{align*}
	By the regularity of $\phi$, we find that
	\begin{align*}
		I_1 &=  \int_0^t \int_{U^c}\int_U \int_s^t\partial_{\tau} P_{\tau -s}^U\phi(\tau)(y) \nu(y,z) u(s,z)\, d\tau \, dy\, dz\, ds\\
		&=  \int_0^t \int_{U^c}\int_U \int_s^t\partial_{\tau} \bigg(\int_U p_{\tau-s}^U(x,y)\phi(\tau,x)\, dx\bigg) \nu(y,z) u(s,z)\, d\tau \, dy\, dz\, ds\\
		&= \int_0^t \int_{U^c}\int_U \int_s^t \int_U \partial_{\tau}p_{\tau-s}^U(x,y)\phi(\tau,x)\, dx\, \nu(y,z) u(s,z)\, d\tau \, dy\, dz\, ds\\
		&+\int_0^t \int_{U^c}\int_U \int_s^t \int_U p_{\tau-s}^U(x,y)\partial_{\tau}\phi(\tau,x)\, dx\, \nu(y,z) u(s,z)\, d\tau \, dy\, dz\, ds.
	\end{align*}
	Since $\partial_\tau p_\tau^D = \Delta^{\alpha/2}p_{\tau}^D$ and $\phi(\tau,\cdot)\in C_c^\infty(U)$, by Fubini--Tonelli we obtain
	\begin{align*}
		I_1 &= \int_0^t\int_U (\partial_\tau + \Delta^{\alpha/2})\phi(\tau,x)\int_0^\tau \int_{U^c}\int_U p_{\tau-s}^U(x,y)\nu(y,z)u(s,z)\, dy\, dz\, ds\, dx\, d\tau\\
		&= \int_0^t\int_U (\partial_\tau +\Delta^{\alpha/2})\phi(\tau,x)u(\tau,x)\, dx\, d\tau.
	\end{align*}
	Furthermore,
	\begin{align*}
		I_2 = \int_0^t \int_{U^c} u(s,z) \Delta^{\alpha/2}\phi(s,z)\, dz\, ds,
	\end{align*}
	therefore,  noting that $\partial_\tau \phi(\tau,\cdot) = 0$ on $U^c$, we find that
	\begin{align*}
		I_1 + I_2 = \int_0^t \int_{\mR^d} (\partial_\tau +\Delta^{\alpha/2})\phi(\tau,x)u(\tau,x)\, dx\, d\tau.
	\end{align*}
	This ends the proof.
\end{proof}
\section{Caloric functions are classical solutions}\label{sec:mvpclass}
In this section we discuss sufficient conditions on the exterior data $g$ and boundary data $\mu$, under which a caloric function $u$ is regular enough to solve the fractional heat equation pointwise. {In the whole section we assume that the conditions of Theorem~\ref{th:mvpclass} hold. We first give an integrability estimate for the caloric function.}
\begin{lemma}\label{lem:mutog}Assume that $D$ is bounded and Lipschitz, let {$g\in C([0,T],L^1(D^c,P_D(x_0,\cdot))$ for some $x_0\in D$,} $\mu\in C([0,T],\mathcal{M}(\partial D))$, and let $\omega_\mu$ {and $\omega_g$} be moduli of continuity for $\mu\colon [0,T]\to \mathcal{M}(\partial D)$ {and $g\colon [0,T] \to L^1(D^c,P_D(x_0,\cdot))$ respectively}. If 
	\begin{align}\label{eq:uwith0initial}
		u(t,x) = {\int_0^t\int_{D^c} J^D(t,x,\tau,z)g(\tau,z)\, dz\, d\tau  }+\int_0^t\int_{\partial D} \eta_{t-\tau,Q}(x)\, \mu(\tau,dQ)\, d\tau,\quad t\in[0,T],\ x\in D,\end{align} and $u=0$ elsewhere, then $u\in C([0,T],L^1({1\wedge \nu}))$ and there exists $\sigma \in (0,1)$ such that for $0\leq s,t\leq T$,
	\begin{align*}
	\|u(s) - u(t)\|_{L^1(D)} \lesssim  |t-s|^{1-\sigma} (\sup\limits_{t\in [0,T]} \|\mu(t)\|_{TV} + {\sup\limits_{t\in [0,T]} \|g(t)\|_{L^1(P_D(x_0,\cdot))}})+ \omega_\mu(t-s) + {\omega_g(t-s)}.
	\end{align*}
\end{lemma}\begin{proof}
	{We first establish two auxiliary estimates. First,	by \cite[Corollary~3.6]{HK} there exists $\sigma\in(0,1)$ such that
		\begin{align}\label{eq:etaintegralestimate}
			\int_D \eta_{t,Q}(x)\, dx \lesssim t^{-\sigma} \int_D \mP^x(\tau_D>t)p_t(x,Q)\, dx \lesssim t^{-\sigma}.
		\end{align}
		Furthermore, we have
		\begin{align*}
			\int_D J^D(t,x,0,z)\, dx = \int_D \int_D p_t^D(x,y)\nu(y,z)\, dy\, dx = \int_D \mP^y(\tau_D > t)\nu(y,z)\, dy.
		\end{align*}
		Therefore, by  \cite[Lemmas~B.2 and 3.3]{HK}, for any fixed $x_0\in D$ we have
		\begin{align}\label{eq:Jintegralestimate}
			\int_D J^D(t,x,0,z)\, dx \lesssim t^{-\sigma}\int_D \mP^y(\tau_D>1)\nu(y,z)\, dy \lesssim t^{-\sigma} \int_D G_D(x_0,y)\nu(y,z)\, dy &= t^{-\sigma}P_D(x_0,z).
		\end{align}
		Denote the first integral on the right-hand side of \eqref{eq:uwith0initial} by $u_1$. Let $0 \leq s<t\leq T$. Then,
		\begin{align*}
			\|u_1(t) - u_1(s)\|_{L^1(D)} \leq&\ \int_0^s\int_{D^c}|g(t-\tau,z) - g(s-\tau,z)| \int_D J^D(\tau,x,0,z)\, dx\, dz\, d\tau\\
			&+\int_s^t\int_{D^c}g(t-\tau,z)\int_D J^D(\tau,x,0,z)\, dx\, dz\, d\tau\\
			 \leq&\ \int_0^s\tau^{-\sigma }\int_{D^c}|g(t-\tau,z) - g(s-\tau,z)| P_D(x_0,z)\, dz\, d\tau\\
			 &+\int_s^t\tau^{-\sigma}\int_{D^c}g(t-\tau,z) P_D(x_0,z)\, dz\, d\tau\\
			 &\lesssim \omega_g(t-s) + |t-s|^{1-\sigma}\sup\limits_{t\in [0,T]} \|g(t)\|_{L^1(P_D(x_0,\cdot))}.
		\end{align*}
	The estimate for the second term in \eqref{eq:uwith0initial} follows from a similar calculation using \eqref{eq:etaintegralestimate} instead of \eqref{eq:Jintegralestimate}.
	}
\end{proof}
\begin{remark}\label{rem:simp}
{Before we prove interior regularity in space and time we will make some reductions under the assumptions of Theorem~\ref{th:mvpclass}. First, note that by \cite[Proposition~5.15]{HK} $u$ is continuous in $(0,T)\times D$, so by \cite[Theorem~4.14]{MR2008600} it is also locally H\"older continuous therein. Furthermore, by Lemma~\ref{lem:ptdu0}, the fact that $P_t^Du_0 = P_{t-\epsilon}^DP_\epsilon^Du_0$ and the boundedness of $\Delta^{\alpha/2}_xp_t^D(x,\cdot)=\partial_t p_t^D(x,\cdot)$ for fixed $t>0$ (see \cite[Corollary~3.6]{HK}), we find that $P_t^Du_0$ is a classical solution, so we can assume that $u_0=0$. By this, the mean value property on $(\epsilon,T-\epsilon)\times D$, and Lemma~\ref{lem:mutog}, we have $u\in C^{\rm Dini}_{\rm loc}((0,T),L^1(1\wedge \nu))$.  Then, by considering the mean value property on $(\epsilon,T-\epsilon)\times B$ for some $\epsilon>0$ and a ball $B\subset\subset D$, we may assume (after translating and rescaling) that $D=B_1$, $g$ is uniformly H\"older continuous on $(0,T)\times (B_2\setminus B_1)$, and $g\in C^{\rm Dini}((0,T), L^1(B_1^c,1\wedge\nu))$.}

{To summarize, in the remainder of this section we assume without loss of generality that $D=B_1$ and 
\begin{align}\label{eq:uform}
	u(t,x) = \begin{cases}\int_0^t\int_{B_1^c} J^{B_1}(t,x,s,z)g(s,z)\, dz\, ds,\quad &t\in(0,T], \ x\in B_1,\\
		g(t,x),\quad &t\in[0,T],\ x\in B_1^c,\\
	0,\quad &\text{otherwise},\end{cases}
\end{align}
$g$ is uniformly H\"older continuous on $(0,T)\times (B_2\setminus B_1)$ and $g\in C^{\rm Dini}((0,T),L^1(B_1^c,1\wedge \nu))$.}
\end{remark}
\begin{lemma}
	For {$u$ defined in \eqref{eq:uform}}, we have $u(t,\cdot)\in C^{\infty}(B_1)$ for all $t\in(0,T)$.
\end{lemma}
\begin{proof}
We will prove smoothness in $B_{1/2}$, the modification for the whole of $B_1$ is rather obvious. We first note that by the definition of $J^{B_1}$ and Tonelli's theorem,
\begin{align*}
	u(t,x) = \int_0^t P_{t-s}^{B_1}[f(s)](x)\, ds,
\end{align*}
where 
\begin{align*}
	f(s,y) = \begin{cases}\int_{B_1^c} g(s,z)\nu(y,z)\, dz,\quad &s\in (0,T),\ y\in B_1,\\0,\quad &\text{otherwise}.\end{cases}
\end{align*}
By properties of $g$, for each $s$ the function $f(s,\cdot)$ is smooth and we have the following bound:
\begin{align}\label{eq:Dnbound}
	|D^n_y f(s,y)| \lesssim \delta_{B_1}^{-\alpha-n}(y),\quad s\in (0,T),\ y\in B_1.
\end{align}
Consider a cut-off function $\eta\in C_c^\infty(\mR^d)$ satisfying $0\leq\eta\leq 1$, $\eta \equiv 0$ on $B_{3/4}^c$, $\eta \equiv 1$ on $B_{1/2}$, and let $f_1 = \eta f$, $f_2 = (1-\eta)f$, $u_1(t) =\textstyle \int_0^t P_{t-s}^{B_1} [f_1(s)]\, ds$, $u_2(t) =\textstyle \int_0^t P_{t-s}^{B_1} [f_2(s)]\, ds$.

We recall Hunt's formula:
\begin{align*}
	p_t^{B_1}(x,y) = p_t(x,y) - \mE^y [p_{t-\tau_{B_1}}(x,X_{\tau_{B_1}});\, \tau_{B_1} < t],
\end{align*}
which yields
\begin{align*}
	u_1(t,x) = \int_0^t P_{t-s}[f_1(s)]\, ds - \int_0^t \int_{B_1} \mE^y[p_{s-\tau_{B_1}}(x,X_{\tau_{B_1}});\, \tau_{B_1} < t] f_1(s,y)\, dy\, ds.
\end{align*}
Since $f_1(s,\cdot)$ is smooth and compactly supported for all $s$, by \eqref{eq:Dnbound} the first term is smooth. For the second term, note that if $x$ is fixed, the expression $p_{s-\tau_{B_1}}(x,X_{\tau_{B_1}})$ and its derivatives are uniformly bounded, because $X_{\tau_{B_1}}\in B_1^c$ almost surely. Therefore, by the dominated convergence theorem, 
\begin{align*}
	D^n_x \int_0^t \int_{B_1} \mE^y[p_{s-\tau_{B_1}}(x,X_{\tau_{B_1}});\, \tau_{B_1} < t] f_1(s,y)\, dy\, ds= \int_0^t \int_{B_1} \mE^y[D^n_xp_{s-\tau_{B_1}}(x,X_{\tau_{B_1}});\, \tau_{B_1} < t] f_1(s,y)\, dy\, ds,
\end{align*}
which proves that $u_1(t,\cdot)$ is smooth for all $t\in(0,T)$.

For $u_2$, note that
\begin{align}\label{eq:u2}
	u_2(t,x) = \int_0^t P_{t-s}^{B_1}[f_2(s)]\, ds  = \int_0^t \int_{B_1\setminus B_{1/2}} p_{t-s}^{B_1}(x,y) f_2(s,y)\, dy\, ds.
\end{align}
Therefore, the smoothness of $u_2$ follows from Theorem~\ref{th:DHKder}, \eqref{eq:Dnbound} for $n=0$, and the dominated convergence theorem.
\end{proof}
We now proceed to time regularity.
\begin{lemma}\label{lem:fixedt}
	Assume that $g\in L^\infty(B_2\setminus B_1)\cap L^1({B_1^c},1\wedge \nu)$. Then, there exists $C = C(d,\alpha,\delta_{B_1}(x))$ such that 
	\begin{align*}
		\int_{B_1^c} J^{B_1}(t,x,s,z) g(z)\, dz\leq C(\|g\|_{L^\infty(B_2\setminus B_1)} + \|g\|_{L^1(1\wedge \nu)}),\quad 0<s<t.
	\end{align*}
\end{lemma}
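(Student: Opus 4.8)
The plan is to estimate $\int_{B^c} J^B(t,x,s,z)|g(z)|\,dz$ by splitting $B^c$ into the bounded shell $B_2\setminus B$, where the hypothesis $g\in L^\infty(B_2\setminus B)$ is used, and the far region $B_2^c$, where $g\in L^1(1\wedge\nu)$ is used. Throughout, $x\in B$ is fixed, so $\delta_B(x)$ is a fixed positive number and all constants may depend on it.

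On $B_2^c$ the bound is elementary. For $y\in B$ and $z\in B_2^c$ one has $\tfrac12|z|\le|y-z|\le\tfrac32|z|$, hence $\nu(y,z)\approx|z|^{-d-\alpha}\approx 1\wedge\nu(z)$ with constants depending only on $d,\alpha$. Since $\int_B p_{t-s}^B(x,y)\,dy=\mP^x(\tau_B>t-s)\le 1$, we get $J^B(t,x,s,z)=\int_B p_{t-s}^B(x,y)\nu(y,z)\,dy\lesssim|z|^{-d-\alpha}$, so that
\[
\int_{B_2^c} J^B(t,x,s,z)|g(z)|\,dz\lesssim\int_{B_2^c}(1\wedge\nu(z))|g(z)|\,dz\le\|g\|_{L^1(1\wedge\nu)},
\]
uniformly in $0<s<t$.

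On the shell I would bound $|g(z)|\le\|g\|_{L^\infty(B_2\setminus B)}$ and reduce to showing that $\int_{B_2\setminus B} J^B(t,x,s,z)\,dz$ is bounded uniformly in $s<t$. But this is contained in the second lemma of Section~\ref{sec:prelim} on the uniform integrability of $z\mapsto J^B(t,x,s,z)$: since $B$ is bounded and Lipschitz, $\int_{B^c} J^B(t,x,s,z)\,dz\le C$ with $C$ depending only on $x$, and by rotational symmetry of $B$ this $C$ depends on $x$ only through $\delta_B(x)$. Adding the two pieces yields the claim with $C=C(d,\alpha,\delta_B(x))$.

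The only point worth flagging is that the shell contribution must be handled by invoking the already-established uniform $L^1$-bound rather than re-deriving it: writing $\int_{B_2\setminus B} J^B(t,x,s,z)\,dz=\int_B p_{t-s}^B(x,y)\bigl(\int_{B_2\setminus B}\nu(y,z)\,dz\bigr)dy\approx\int_B p_{t-s}^B(x,y)\delta_B(y)^{-\alpha}\,dy$ and attempting a crude splitting of the heat kernel via \eqref{eq:CKS} alone leads to a bound of order $(t-s)^{-1}$, because the weight $\delta_B(y)^{-\alpha}$ is only barely non-integrable against the free kernel near $\partial B$ for small times; the genuine cancellation is the off-diagonal boundary decay of $p^B$, which is exactly what the earlier lemma supplies through \cite{MR2569321} and \cite[Lemma~5.16]{HK}. (Alternatively one can argue by hand, using that $x$ is a fixed interior point so $p_{t-s}(x,y)\lesssim(t-s)$ for $y$ in a fixed neighbourhood of $\partial B$, together with \eqref{eq:largetimes} for $t-s\ge T$; but invoking the earlier lemma is cleaner and is the route I would take.)
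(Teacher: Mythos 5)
Your proof is correct, but it handles the shell term differently from the paper. The paper's proof is a one-line direct computation: by Fubini, $\int_{B^c}J^B(t,x,s,z)g(z)\,dz=\int_B p_{t-s}^B(x,y)\int_{B^c}g(z)\nu(y,z)\,dz\,dy\lesssim(\|g\|_{L^\infty(B_2\setminus B)}+\|g\|_{L^1(1\wedge\nu)})\int_B p_{t-s}^B(x,y)\delta_B(y)^{-\alpha}\,dy$, and the last integral is bounded uniformly in $t-s$ by \eqref{eq:CKS} for $t-s\le 1$ and by \eqref{eq:largetimes} for $t-s>1$. You instead split $B^c$ into $B_2\setminus B$ and $B_2^c$, treat the far field directly (correctly), and dispose of the shell by citing the earlier unnumbered lemma on uniform boundedness of $\int_{B^c}J^B(t,x,s,z)\,dz$ plus rotational symmetry of the ball to pin the constant to $\delta_B(x)$; that is a legitimate shortcut, at the cost of routing through \cite[Proposition~4.3]{MR2569321} and \cite[Lemma~5.16]{HK} rather than the explicit kernel bounds. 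One remark: your claim that ``a crude splitting via \eqref{eq:CKS} alone leads to a bound of order $(t-s)^{-1}$'' is not right. Since $x$ is a fixed interior point, for $\delta_B(y)\le\delta_B(x)/2$ one has $|x-y|\gtrsim\delta_B(x)$, hence $p_{t-s}(x,y)\lesssim(t-s)\,\delta_B(x)^{-d-\alpha}$ by \eqref{eq:Blum}, and combining this with the factor $1\wedge\delta_B(y)^{\alpha/2}/\sqrt{t-s}\le\delta_B(y)^{\alpha/2}/\sqrt{t-s}$ gives an integrand $\lesssim\sqrt{t-s}\,\delta_B(x)^{-d-\alpha}\delta_B(y)^{-\alpha/2}$, which integrates to a bound uniform in $t-s\in(0,1]$; away from the boundary $\delta_B(y)^{-\alpha}\lesssim\delta_B(x)^{-\alpha}$ and $\int_Bp_{t-s}^B(x,y)\,dy\le1$. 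This is exactly the ``by hand'' alternative you mention parenthetically, and it is what the paper does.
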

\begin{proof}
	Since
	\begin{align*}
		\int_{B_1^c} J^{B_1}(t,x,s,z) g(z)\, dz &= \int_{B_1} p_{t-s}^{B_1}(x,y)\int_{B_1^c} g(z)\nu(y,z)\, dz\, dy\\ &\leq c(\|g\|_{L^\infty(B_2\setminus B_1)} + \|g\|_{L^1(1\wedge \nu)})\int_{B_1} p_{t-s}^{B_1}(x,y) \delta_{B_1}(y)^{-\alpha}\, dy,
	\end{align*}
	the result follows from \eqref{eq:CKS} for $t-s \in (0,1]$, and from \eqref{eq:largetimes} for $t-s>1$. 
\end{proof}
\begin{lemma}\label{lem:tplush}
	Assume that $g\in C((0,T)\times B_2)\cap C((0,T), L^1({B_1^c}, 1\wedge \nu))$. Then, 
	\begin{align*}
		\lim\limits_{h\to 0^+}\frac 1h \int_t^{t+h} \int_{B_1^c} J^{B_1}(t+h,x,s,z) g(s,z)\, dz\, ds = \int_{B_1^c} \nu(x,z)g(t,z)\, dz,\quad t\in (0,T),\ x\in B_1.
	\end{align*}
\end{lemma}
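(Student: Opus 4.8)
The plan is to make the vanishing time increment explicit via a change of variables and then recognise the resulting expression as a Cesàro average of a quantity that converges. Using $J^B(t+h,x,s,z)=J^B(t+h-s,x,0,z)$ and substituting $r=t+h-s$, the left-hand side becomes $\frac1h\int_0^h\Phi(r,t+h-r)\,dr$, where for $\tau>0$ I put $\Phi(\tau,\rho):=\int_{B^c}J^B(\tau,x,0,z)g(\rho,z)\,dz$ and $\Phi(0,\rho):=\int_{B^c}\nu(x,z)g(\rho,z)\,dz$; all integrands are nonnegative ($g\ge 0$), and the $s$-integral over $[t,t+h]$ is finite since it is dominated by $u(t+h,x)<\infty$. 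Because $(r,t+h-r)\to(0,t)$ uniformly for $r\in[0,h]$ as $h\to 0^+$, we have
\[
\Bigl|\tfrac1h\!\int_0^h\!\Phi(r,t+h-r)\,dr-\Phi(0,t)\Bigr|\le\sup_{0\le r\le h}\bigl|\Phi(r,t+h-r)-\Phi(0,t)\bigr|,
\]
so it suffices to show that $\Phi$ is continuous at the corner $(0,t)$ (with $\tau\ge 0$ and $\rho$ near $t$), since this forces the right-hand side to $0$.

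For the continuity I would split $\Phi(\tau,\rho)-\Phi(0,t)=[\Phi(\tau,\rho)-\Phi(\tau,t)]+[\Phi(\tau,t)-\Phi(0,t)]$. The term $\Phi(\tau,t)-\Phi(0,t)$ involves only the frozen slice $g(t,\cdot)$, which lies in $L^1(1\wedge\nu)$ and is bounded on a neighbourhood of $\overline B$ (being continuous on $(0,T)\times B_2$), so Lemma~\ref{lem:JDdelta} gives $\Phi(\tau,t)\to\Phi(0,t)$ as $\tau\to 0^+$. For the first term I estimate $|\Phi(\tau,\rho)-\Phi(\tau,t)|\le\int_{B^c}J^B(\tau,x,0,z)|g(\rho,z)-g(t,z)|\,dz$ and split the domain into the compact annulus $K=\{z:1\le|z|\le 3/2\}\subset B_2$ and $K^c$. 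On $K$: by Lemma~\ref{lem:fixedt} with $g\equiv 1$, $\int_{B^c}J^B(\tau,x,0,z)\,dz\le M_x$ uniformly in $\tau>0$, while $g$ is uniformly continuous on the compact set $[t-\delta,t+\delta]\times K\subset(0,T)\times B_2$, so $\int_KJ^B(\tau,x,0,z)|g(\rho,z)-g(t,z)|\,dz\le M_x\sup_{K}|g(\rho,\cdot)-g(t,\cdot)|\to 0$ as $\rho\to t$, uniformly in $\tau$. On $K^c$: for $|z|>3/2$ one has $|y-z|\approx|x-z|$ over $y\in B$, hence $J^B(\tau,x,0,z)=\int_Bp_\tau^B(x,y)\nu(y,z)\,dy\lesssim (1\wedge\nu(x-z))\int_Bp_\tau^B(x,y)\,dy\le 1\wedge\nu(x-z)$ uniformly in $\tau>0$, so $\int_{K^c}J^B(\tau,x,0,z)|g(\rho,z)-g(t,z)|\,dz\lesssim\|g(\rho,\cdot)-g(t,\cdot)\|_{L^1(1\wedge\nu)}\to 0$ as $\rho\to t$ by the hypothesis $g\in C((0,T),L^1(1\wedge\nu))$, uniformly in $\tau$. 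Combining gives continuity of $\Phi$ at $(0,t)$.

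The main obstacle is exactly this double uniformity: as $h\to 0^+$ the lag $\tau=t+h-s$ collapses to $0$ while the time-slice of $g$ simultaneously drifts, and the argument succeeds only because (i) Lemma~\ref{lem:JDdelta} can be applied to the frozen slice $g(t,\cdot)$, and (ii) the error from freezing the slice is controlled by Lemma~\ref{lem:fixedt} (through the uniform mass bound $M_x$) and by the far-field bound $J^B(\tau,x,0,z)\lesssim 1\wedge\nu(x-z)$, both uniformly in the lag $\tau$. A minor nuisance is that $g$ is only known to be locally bounded near $\overline B$, not bounded on all of $B_2\setminus B$, which is why the splitting into $K$ and $K^c$ (rather than a single application of Lemma~\ref{lem:fixedt} to the difference $g(\rho,\cdot)-g(t,\cdot)$) is needed. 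Once these points are in place, averaging over $r\in[0,h]$ costs nothing and the stated limit follows.
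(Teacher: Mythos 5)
Your proof is correct and follows essentially the same route as the paper's: both reduce the averaged limit to the behavior of $\int_{B^c}J^B(\tau,x,0,z)g(\rho,z)\,dz$ as $(\tau,\rho)\to(0,t)$, freeze $g$ at time $t$ so that Lemma~\ref{lem:JDdelta} applies, and control the freezing error by bounds on the mass of $J^B$ that are uniform in the time lag. The only differences are cosmetic: you replace the paper's use of the mean value theorem for the $s$-average by a direct sup bound, and you spell out the near/far splitting of $B^c$ that the paper compresses into ``the first term vanishes because of the assumptions on $g$.''
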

\begin{proof}
	We first note that by Lemma~\ref{lem:fixedt}, \cite[Lemma~5.16]{HK}, and the dominated convergence theorem the function $s\mapsto \int_{B^c} J^{B_1}(t+h,x,s,z) g(s,z)\, dz$ is continuous on $(0,t+h)$. Therefore, by the mean value theorem, there exists $\xi_h\in (t,t+h)$ such that
	\begin{align*}
		\frac 1h \int_t^{t+h} \int_{B_1^c} J^{B_1}(t+h,x,s,z) g(s,z)\, dz\, ds = \int_{B_1^c} J^{B_1}(t+h,x,\xi_h,z) g(\xi_h,z)\, dz.
	\end{align*}
	Furthermore, 
	\begin{align*}
	&\int_{B_1^c}	J^{B_1}(t+h,x,\xi_h,z) g(\xi_h,z)\, dz\\ = &\int_{B_1^c} J^{B_1}(t+h,x,\xi_h,z) (g(\xi_h,z) -g(t,z))\, dz  + \int_{B_1^c}J^{B_1}(t+h,x,\xi_h,z) g(t,z)\, dz.
	\end{align*}
	The first term vanishes as $h\to 0^+$ because of the assumptions on $g$ and the second term converges to $\textstyle\int_{B_1^c}\nu(x,z)g(t,z)\, dz$ as $h\to 0^+$ by Lemma~\ref{lem:JDdelta}. This ends the proof.
\end{proof}
\begin{proof}[Proof of Theorem~\ref{th:mvpclass}]
	Let us first recall Remark~\ref{rem:simp}, {by which we can assume that $D=B_1$ and $u$ is of the form \eqref{eq:uform} with $g$ uniformly H\"older continuous on $(0,T)\times (B_2\setminus B_1)$ in both variables and $g\in C^{\rm Dini}((0,T),L^1(B_1^c,1\wedge \nu))$.} 
	We {now} compute the time derivative of $u$. The difference quotient is
	\begin{align*}
		&\frac 1h \bigg(\int_0^{t+h}\int_{B_1^c} g(s,z) J^{B_1}(t+h,x,s,z) \, dz\, ds - \int_0^{t}\int_{B_1^c} g(s,z) J^{B_1}(t,x,s,z) \, dz\, ds\bigg)\\
		= &\frac 1h\bigg(\int_0^{t}\int_{B_1^c} g(s,z) (J^{B_1}(t+h,x,s,z) - J^{B_1}(t,x,s,z)) \, dz\, ds + \int_t^{t+h}\int_{B_1^c} g(s,z) J^{B_1}(t+h,x,s,z) \, dz\, ds \bigg)\\
		= & \frac 1h\bigg(\int_0^{t}\int_{B_1^c} (g(s,z) - g(t,z)) (J^{B_1}(t+h,x,s,z) - J^{B_1}(t,x,s,z)) \, dz\, ds \\
		+ &  \frac 1h\int_0^{t}\int_{B_1^c} g(t,z) (J^{B_1}(t+h,x,s,z) - J^{B_1}(t,x,s,z)) \, dz\, ds+ \frac 1h\int_t^{t+h}\int_{B_1^c} g(s,z) J^{B_1}(t+h,x,s,z) \, dz\, ds\\
		=: & I_1(h) + I_2(h) + I_3(h).
	\end{align*}
	Note that since $g(t,z)$ does not depend on $s$ we have
	\begin{align*}
		I_2(h) = \frac 1h \bigg(-\int_t^{t+h} + \int_0^h\bigg) \int_{B_1^c} J^{B_1}(t+h,x,s,z) g(t,z)\, dz\, ds. 
	\end{align*}
	Therefore, by Lemma~\ref{lem:tplush} we find that
	\begin{align*}
		\lim\limits_{h\to 0^+} (I_2(h) + I_3(h)) = \int_{B_1^c} J^{B_1}(t,x,0,z)g(t,z)\, dz.
	\end{align*}
	In $I_1$ we first use the fundamental theorem of calculus:
	\begin{align*}
		I_1(h) = \int_0^t\int_{B_1^c}\int_0^1 \partial_t J^{B_1}(t+\lambda h,x,s,z) (g(s,z) - g(t,z))\, d\lambda \, dz\, ds.
	\end{align*}
	With the aim to use the dominated convergence theorem we bound the integrand using \eqref{eq:JBest}:
	\begin{align*}
		&|\partial_t J^{B_1}(t+\lambda h,x,s,z) (g(s,z) - g(t,z))|\\
		 \lesssim &\, |g(s,z) - g(t,z)| \begin{cases}\frac 1{t-s} + \frac{\delta_{B_1}(z)^{-\alpha/2}}{(t-s)^{1-(2-\alpha)/2\alpha}},\quad &0<t-s<T,\ z\in B_2\setminus B_1,\\
				\frac{|z|^{-d-\alpha}}{t-s},\quad &0<t-s<T,\ z\in B_2^c.\end{cases}
		\end{align*}
		Note that the bound is an integrable function. Indeed, for $z\in B_2\setminus B_1$ we use the fact that $g$ is H\"older {in time} on $(0,T)\times B_2\setminus B_1$ and for $z\in B_2^c$ we apply the assumption on Dini continuity of $g$. It follows that
		\begin{align}\label{eq:dtu}
			\partial_t u(t,x) = \int_0^t\int_{B_1^c} \partial_t J^{B_1}(t,x,s,z) (g(s,z) - g(t,z)) \, dz\, ds + \int_{B_1^c} J^{B_1}(t,x,0,z)g(t,z)\, dz.
		\end{align}
		The derivative is also continuous in $t$ and $x$, but for clarity we defer the proof of that to the appendix.
		
		We will now compute $\Delta^{\alpha/2} u(t,x)$ and show that it equals $\partial_t u(t,x)$. For $\eps>0$ let
		\begin{align*}
			\Delta^{\alpha/2}_\eps u(t,x) &= \int_{B_1\setminus B_\eps(x)} (u(t,y) - u(t,x) - \nabla_x u(t,x)(y-x)\textbf{1}_{B_{1/2}(x)}(y){\textbf{1}_{[1,2)}(\alpha)})\nu(y-x)\, dy\\ 
			&+ \int_{B_1^c} (g(t,y) - u(t,x))\nu(y-x)\, dy =: \I(\eps) + \II.
		\end{align*}
		Since $u(t,\cdot)$ is smooth in $B_1$ and $g\in L^1({B_1^c}, 1\wedge \nu)$, we have $\Delta_\eps^{\alpha/2}u(t,x) \to \Delta^{\alpha/2}u(t,x)$ as $\eps\to 0^+$. Furthermore,
		\begin{align*}
			\I(\eps) &= \int_{B_1\setminus B_\eps(x)} \int_0^t \int_{B_1^c} (J^{B_1}(t,y,s,z) - J^{B_1}(t,x,s,z) - \nabla_x J^{B_1}(t,x,s,z)(y-x)\textbf{1}_{B_{1/2}(x)}(y){\textbf{1}_{[1,2)}(\alpha)})\times\\
			&\hspace{40pt} \times (g(s,z) - g(t,z))\, dz\, ds \nu(y-x)\, dy\\
			&+  \int_{B_1\setminus B_\eps(x)} \int_0^t \int_{B_1^c} (J^{B_1}(t,y,s,z) - J^{B_1}(t,x,s,z) - \nabla_x J^{B_1}(t,x,s,z)(y-x)\textbf{1}_{B_{1/2}(x)}(y){\textbf{1}_{[1,2)}(\alpha)})\times\\
			&\hspace{40pt} \times  g(t,z)\, dz\, ds \nu(y-x)\, dy\\
			 &=: \I.1(\eps) + \I.2 (\eps).
		\end{align*}
			If we adopt the convention that $J^{B_1}(t,x,s,z) = 0$ for $x\in B_1^c$, then for $x\in B_{1/2}$,
		\begin{align}\nonumber
			\Delta^{\alpha/2}_x J^{B_1}(t,x,s,z) &= \int_{B_1} (J^{B_1}(t,y,s,z) - J^{B_1}(t,x,s,z) - \nabla_x J^{B_1}(t,x,s,z)(y-x)\textbf{1}_{B_{1/2}(x)}(y){\textbf{1}_{[1,2)}(\alpha)})\times\\
			&\qquad\qquad\times \nu(y-x)\, dy - J^{B_1}(t,x,s,z)\nu(x,B_1^c).\label{eq:fracJB}
		\end{align}
		Therefore, by Dini continuity of $g$, Fubini--Tonelli, the dominated convergence theorem, the proof of Theorem~\ref{th:JBest}, and the fact that $(\partial_t - \Delta^{\alpha/2})J^{B_1}= 0$, we get that $I_1(\eps)\to I_1(0)$ as $\eps\to 0^+$ and
		\begin{align*}
			\I.1(0) &= \int_0^t\int_{B_1^c} \Delta^{\alpha/2}_xJ^{B_1}(t,x,s,z) (g(s,z) - g(t,z))\, dz \, ds\\
			 &+\nu(x,B_1^c) \int_0^t \int_{B_1^c} J^{B_1}(t,x,s,z)(g(s,z) - g(t,z))\, dz\, ds\\
			&= \int_0^t\int_{B_1^c} \partial_t J^{B_1}(t,x,s,z) (g(s,z) - g(t,z))\, dz \, ds\\ &+\nu(x,B_1^c) \bigg(u(t,x) - \int_0^t \int_{B_1^c} J^{B_1}(t,x,s,z) g(t,z)\, dz\, ds\bigg)\\
			&=: \I.1.1 + \I.1.2.
		\end{align*}
		Thus, it suffices to show that $\II + \I.2(\eps) + \I.1.2 \to \int_{B_1^c} J^{B_1}(t,x,0,z)g(t,z)\, dz$ as $\eps\to 0^+$. We have
		\begin{align}\label{eq:eqn}
			\II +  \I.1.2 =  \int_{B_1^c} g(t,y) \nu(y-x)\, dy - \nu(x,B_1^c) \int_0^t \int_{B_1^c} J^{B_1}(t,x,s,z) g(t,z)\, dz\, ds.
		\end{align}
		In order to handle $\I.2(\eps)$ we {note that the Poisson kernel satisfies the formula} 
		\begin{align*}
			P_{B_1}(x,z) = \int_{-\infty}^t J^{B_1}(t,x,s,z)\, ds,\quad x\in B_1,\ z\in B_1^c,
		\end{align*}
		and the function 
		\begin{align*}
		P_{B_1}[g(t)](x) := \begin{cases}
			\int_{B_1^c} g(t,z) P_{B_1}(x,z)\, dz,\quad &x\in B_1,\\
			g(t,x),\quad x\in B_1^c,
		\end{cases}
		\end{align*}
		is smooth and harmonic in $B_1$, see \cite{MR1671973}. Thus,
		\begin{align*}
			\I.2(\eps) &= \int_{B_1\setminus B_\eps(x)} (P_{B_1}[g(t)](y) - P_{B_1}[g(t)](x) - \nabla P_{B_1}[g(t)](x)(y-x)\textbf{1}_{B_{1/2}(x)}(y){\textbf{1}_{[1,2)}(\alpha)})	\nu(x-y)\, dy\\
							&-  \int_{B_1\setminus B_\eps(x)} \int_{-\infty}^0 \int_{B_1^c} (J^{B_1}(t,y,s,z) - J^{B_1}(t,x,s,z) - \nabla_x J^{B_1}(t,x,s,z)(y-x)\textbf{1}_{B_{1/2}(x)}(y){\textbf{1}_{[1,2)}(\alpha)})\times\\
							&\hspace{40pt} \times g(t,z)\, dz\, ds \nu(y-x)\, dy.
			\end{align*}
			By \eqref{eq:JBlarge}, Fubini--Tonelli, the dominated convergence theorem, \eqref{eq:fracJB}, $(\partial_t - \Delta^{\alpha/2})J^{B_1} = 0$, and the harmonicity of $P_{B_1}[g(t)]$, we find that $I_2(\eps) \to I_2(0)$ and
			\begin{align*}
				I_2(0) &= \nu(x,B_1^c)\int_{B_1^c}P_{B_1}(x,z)g(t,z)\, dz - \int_{B_1^c} g(t,y)\nu(y-x)\, dy\\
								&- \int_{-\infty}^0 \int_{B_1^c} \partial_t J^{B_1}(t,x,s,z)g(t,z)\, dz\, ds -\nu(x,B_1^c) \int_{-\infty}^0 \int_{B_1^c} J^{B_1}(t,x,s,z) g(t,z)\, dz\, ds\\
							&= \nu(x,B_1^c) \int_0^t \int_{B_1^c} J^{B_1}(t,x,s,z) g(t,z)\, dz\, ds + \int_{B_1^c} J^{B_1}(t,x,0,z)g(t,z)\, dz - \int_{B_1^c} g(t,y)\nu(y-x)\, dy.
			\end{align*}
			Coming back to \eqref{eq:eqn} and using the definition of $P_{B_1}$ we get
			\begin{align*}
				\II + \I.2(0) + \I.1.2 = \int_{B_1^c} J^{B_1}(t,x,0,z)g(t,z)\, dz,
			\end{align*}
			which implies that $\partial_t u(t,x) = \Delta^{\alpha/2} u(t,x)$. This ends the proof.
\end{proof}
\section{Distributional and classical solutions are caloric}\label{sec:distmvp}
Having obtained sufficient conditions for a caloric function to be a classical solution, the proof of the mean value property for distributional and classical solutions is rather standard, see, e.g., \cite{MR1671973}. We give a rather detailed outline for the sake of completeness, starting with an integrability result.
\begin{lemma}\label{lem:l1l1}
	If $u$ is a distributional solution to \eqref{eq:FHE}, then $u\in L^1((0,T), L^1(\mR^d,1\wedge |x|^{-d-\alpha}))$.
\end{lemma}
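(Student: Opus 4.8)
The plan is to test the distributional formulation \eqref{eq:weakform} against a suitable family of nonnegative cutoff functions and then pass to a limit. The natural choice is to separate the interior of $D$ from the exterior, since on $D^c$ the function $u$ equals the given exterior datum $g$, for which $L^1(1\wedge\nu)$-integrability in $t$ is part of the hypotheses in the applications, but which we must extract here purely from being a distributional solution. First I would observe that, since $u\geq 0$, it suffices to produce a single $t_1\in(0,T)$ at which $\int_D u(t_1,x)\,dx<\infty$ together with a uniform bound of the form $\int_0^{T'}\int_{\mR^d} u(\tau,x)\,(1\wedge\nu)(x)\,dx\,d\tau<\infty$ on compact time subintervals. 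The absolute convergence clause in the definition of a distributional solution already gives, for each fixed $\phi\in C_c^\infty([0,T)\times D)$, that $(\tau,x)\mapsto u(\tau,x)\,\Delta^{\alpha/2}\phi(\tau,x)$ is integrable over $(s,t)\times\mR^d$; the point is to leverage the long-range tail of $\Delta^{\alpha/2}\phi$, which behaves like $\nu$ at infinity, by choosing $\phi$ cleverly.

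Concretely, fix a ball $B=B(x_0,r)\subset\subset D$ and a nonnegative $\psi\in C_c^\infty(D)$ with $\psi\equiv 1$ on $B$ and $\operatorname{supp}\psi\subset\subset D$, and a nonnegative time cutoff $\chi\in C_c^\infty([0,T))$ with $\chi\equiv 1$ on a subinterval $[s_0,t_0]$. Apply \eqref{eq:weakform} with $\phi(\tau,x)=\chi(\tau)\psi(x)$ (which lies in $C_c^\infty([0,T)\times D)$), for the pair $0\le s< t<T$ chosen so that $\chi(s)=\chi(t)=0$; the two boundary terms vanish and one obtains
\begin{align*}
0=\int_0^t\!\int_{\mR^d}\bigl(\chi'(\tau)\psi(x)+\chi(\tau)\Delta^{\alpha/2}\psi(x)\bigr)u(\tau,x)\,dx\,d\tau.
\end{align*}
Now split $\Delta^{\alpha/2}\psi(x)$ according to $x\in D$ or $x\in D^c$. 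On $D^c$, since $\psi$ is supported well inside $D$, we have $\Delta^{\alpha/2}\psi(x)=\int_D\psi(y)\nu(y-x)\,dy$, which is strictly positive and comparable to $1\wedge|x|^{-d-\alpha}$ for $x\in D^c$ (bounded below on a collar of $\partial D$ and $\approx|x|^{-d-\alpha}$ at infinity, by \eqref{eq:Blum}-type elementary estimates on $\nu$). Rearranging the identity isolates $\int_0^t\chi(\tau)\int_{D^c}u(\tau,x)(1\wedge\nu)(x)\,dx\,d\tau$ on one side, bounded by $\int_0^t|\chi'(\tau)|\int_D\psi(x)u(\tau,x)\,dx\,d\tau+\int_0^t\chi(\tau)\int_D|\Delta^{\alpha/2}\psi(x)|u(\tau,x)\,dx\,d\tau$, and both of these are finite by the absolute-convergence clause applied to the admissible test functions $\chi'\otimes\psi$ and $\chi\otimes\psi$. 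This yields $u\in L^1((s_0,t_0),L^1(D^c,1\wedge\nu))$ for every such subinterval, hence on all of $(\delta,T-\delta)$, and near $t=0$ one uses a cutoff with $\chi(0)=1$ and the first boundary term $\int_D\phi(0,x)u(0,x)\,dx$, which is finite because $u(0,\cdot)=u_0$ and distributional solutions are (by hypothesis) well-defined with the integrals converging. The interior piece $u\in L^1((0,T),L^1_{loc}(D))$ is obtained simultaneously, since one is free to enlarge $\psi$ to be $1$ on any given compact subset of $D$, and $1\wedge|x|^{-d-\alpha}$ is bounded below on compacta of $D$.

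The main obstacle I anticipate is the behaviour near the lateral boundary $\partial D$ and near the initial time: the identity above controls $u$ only away from $\partial D$ in space (the weight $\Delta^{\alpha/2}\psi$ degenerates as one approaches $\partial D$ from inside, while from outside it is bounded below only on a fixed collar) and only on compact subintervals of $(0,T)$ in time. To get integrability up to $t=0$ one needs the boundary term $\int_D\phi(0,x)u(0,x)\,dx$ from \eqref{eq:weakform} to be finite, which requires interpreting $u(0,\cdot)$; I would handle this by taking $s=0$ in \eqref{eq:weakform} with $\chi(0)=1$, noting the definition of distributional solution already presupposes the corresponding integral converges absolutely, so no extra work is needed. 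For the spatial boundary, one does not actually need integrability of $u$ right up to $\partial D$ for the statement $u\in L^1((0,T),L^1(1\wedge|x|^{-d-\alpha}))$, because the region $D$ itself is bounded and $1\wedge|x|^{-d-\alpha}$ is bounded there, so the bound $\int_0^t\chi(\tau)\int_{D}u\,dx\,d\tau<\infty$ obtained above (with $\psi$ chosen to cover an arbitrarily large compact subset of $D$, then a monotone/exhaustion argument using $u\ge0$) closes the interior estimate; combined with the exterior estimate this gives the claim. The only genuinely delicate point is thus the monotone exhaustion of $D$ by the supports of admissible $\psi$, which works precisely because $u\ge 0$ lets us invoke the monotone convergence theorem.
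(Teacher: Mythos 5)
Your overall strategy---testing \eqref{eq:weakform} against bump functions, exploiting the long-range lower bound $\Delta^{\alpha/2}\psi(x)\gtrsim 1\wedge|x|^{-d-\alpha}$ off the support of $\psi$, and invoking the absolute-convergence clause in the definition of distributional solution---is the right one, and your treatment of the exterior region $D^c$ and of the endpoint $t=0$ is fine. The gap is in the interior estimate, that is, in controlling $\int\!\!\int_D u\,dx\,d\tau$ up to the lateral boundary. Since $D$ is bounded, the weight $1\wedge|x|^{-d-\alpha}$ is comparable to $1$ on all of $D$, so the lemma genuinely requires $\int_0^t\int_D u\,dx\,d\tau<\infty$, not merely $L^1((0,T),L^1_{loc}(D))$. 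Your proposed remedy---take $\psi_K\equiv 1$ on compacts $K\nearrow D$ and conclude by monotone convergence---does not close: for each $K$ the absolute-convergence clause applied to $\chi\otimes\psi_K$ gives a finite bound $\int\!\!\int_K u\leq C_K$, but $C_K$ depends on the test function and nothing prevents $C_K\to\infty$ as $K\nearrow D$; monotone convergence then only identifies $\int\!\!\int_D u$ with the (possibly infinite) supremum of these quantities. You flag the boundary collar yourself as the delicate point, but the MCT does not resolve it.

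The fix---and this is essentially what the paper does---is to observe that the lower bound you use on $D^c$ actually holds on the whole complement of $\supp\psi$: for every $x\notin\supp\psi$ one has $\Delta^{\alpha/2}\psi(x)=\int\psi(y)\nu(y-x)\,dy\gtrsim 1\wedge|x|^{-d-\alpha}$, so a single bump already controls $u$ on $(\supp\psi)^c\supset D^c\cup(D\setminus\supp\psi)$, boundary collar included. The only region left uncovered is the fixed compact set $\supp\psi$ itself, and that is handled by a second bump with support disjoint from the first. The paper phrases this symmetrically: it takes two disjoint balls $B(x_i,2r)\subset D$ and test functions $\phi_i$, constant in time, equal to $-1$ on $B(x_i,r/2)$ and supported in $B(x_i,r)$, so that $|\Delta^{\alpha/2}\phi_i(x)|\geq\int_{B(x_i,r/2)}\nu(x-z)\,dz\gtrsim 1\wedge|x|^{-d-\alpha}$ for all $x\in B(x_i,r)^c$, and $B(x_1,r)^c\cup B(x_2,r)^c=\mR^d$. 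With this modification your argument goes through and no exhaustion is needed.
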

\begin{proof}
	There exist $r>0$ and $x_1,x_2\in D$ such that $B(x_1,2r)\cup B(x_2,2r)\subset D$ and $B(x_1,2r)\cap B(x_2,2r)=\emptyset$. For fixed $t\in (0,T)$ and for $i=1,2$, let $\phi_i\in C_c^\infty([0,t]\times D)$ be constant in $t$, nonpositive and satisfy $\phi_i\equiv -1$ in $B(x_i,r/2)$ and $\phi_i\equiv 0$ on $B(x_i,r)^c$. For $x\in B(x_i,r)^c$ we have
	\begin{align*}
		(\partial_t +\Delta^{\alpha/2})\phi_i(x) = \Delta^{\alpha/2}\phi_i(x) \geq \int_{B(x_i,r/2)} |x-z|^{-d-\alpha}\, dz \gtrsim 1\wedge |x|^{-d-\alpha}.
	\end{align*}
	Therefore, since the integrals in \eqref{eq:weakform} converge absolutely,
	\begin{align*}
		\infty > \int_0^t\int_{B(x_i,r)^c}	(\partial_t +\Delta^{\alpha/2})\phi_i(x) |u(s,x)|\, dx\, ds \gtrsim  \int_0^t\int_{B(x_i,r)^c} |u(s,x)|(1\wedge |x|^{-d-\alpha})\, dx\, ds.
	\end{align*}
	Thus, since $B(x_1,r)^c\cup B(x_2,r)^c = \mR^d$ we get the desired result.
\end{proof}

 Let $D_\eps = \{x\in D: \delta_D(x) > \eps\}$.
\begin{lemma}\label{lem:ueps}
	Assume that $u$ is a distributional solution to \eqref{eq:FHE}. Let $\eta_\eps\colon \mR\times \mR^d \to [0,\infty)$ be a space-time mollifier, i.e., a smooth function satisfying $\textstyle \int \eta_\eps = 1$ and $\eta_\eps = 0$ outside $(-\eps,\eps)\times B_\eps$. Then $u_\eps = u\ast \eta_\eps$ is a classical solution to \eqref{eq:FHE} on $(\eps,T-\eps)\times D_\eps$ with exterior data $g(t,x) = u_\eps(t,x)$ and initial data $u_0(x) = u_\eps(\eps,x)$.
	\end{lemma}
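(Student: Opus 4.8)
The plan is a routine mollification argument: first check that $u_\eps$ has the regularity demanded of a classical solution, and then feed a shifted, reflected copy of $\eta_\eps$ into the weak formulation to transfer the equation from $u$ to $u_\eps$.

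For the regularity, recall from Lemma~\ref{lem:l1l1} that $u\in L^1((0,T),L^1(\mR^d,1\wedge|x|^{-d-\alpha}))$. For $(t,x)\in(\eps,T-\eps)\times\mR^d$ the integral $u_\eps(t,x)=\int\int u(\tau,y)\eta_\eps(t-\tau,x-y)\,dy\,d\tau$ only involves $\tau\in(t-\eps,t+\eps)$ and $y\in B_\eps(x)$, a region where $1\wedge|y|^{-d-\alpha}$ is bounded away from $0$; hence it converges absolutely, and so does any of its derivatives with $\eta_\eps$ carrying the differentiation, uniformly for $(t,x)$ in compact sets. Differentiating under the integral sign gives $u_\eps\in C^\infty((\eps,T-\eps)\times\mR^d)$, so in particular $\partial_t u_\eps$, $Du_\eps$, $D^2 u_\eps$ exist and are continuous. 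Splitting $\int_{\mR^d}|u_\eps(t,y)|(1\wedge|y|^{-d-\alpha})\,dy$ into a bounded region, where local boundedness of $u_\eps(t,\cdot)$ suffices, and a far region, where $|y|^{-d-\alpha}\approx|w|^{-d-\alpha}$ whenever $|y-w|<\eps$, and invoking Lemma~\ref{lem:l1l1} once more shows $u_\eps(t,\cdot)\in L^1(1\wedge\nu)$ for each $t$, with $t\mapsto u_\eps(t,\cdot)$ continuous into $L^1(1\wedge\nu)$ by dominated convergence.

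Next I would recast the weak identity: given $\psi\in C_c^\infty((0,T)\times D)$, choosing $s$ below and $t$ above the time-support of $\psi$ in \eqref{eq:weakform} kills both boundary terms and leaves $\int_0^T\int_{\mR^d}(\partial_\tau+\Delta^{\alpha/2})\psi(\tau,y)\,u(\tau,y)\,dy\,d\tau=0$, the integral converging absolutely because $\Delta^{\alpha/2}\psi$ decays like $\nu$ and Lemma~\ref{lem:l1l1} applies. That is, $\partial_t u=\Delta^{\alpha/2}u$ in the distributional sense on $(0,T)\times D$.

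Finally, fix $(t,x)\in(\eps,T-\eps)\times D_\eps$ and plug $\psi(\tau,y)=\eta_\eps(t-\tau,x-y)$ into this identity. Since $\delta_D(x)>\eps$ and $\eps<t<T-\eps$, the support of $\psi$ is contained in $(t-\eps,t+\eps)\times B_\eps(x)\subset\subset(0,T)\times D$, so $\psi$ is admissible. Using $\partial_\tau\psi(\tau,y)=-(\partial_1\eta_\eps)(t-\tau,x-y)$, the translation invariance and reflection symmetry of $\Delta^{\alpha/2}$ (which yield $\Delta^{\alpha/2}_y\psi(\tau,y)=(\Delta^{\alpha/2}_2\eta_\eps)(t-\tau,x-y)=\Delta^{\alpha/2}_x[\eta_\eps(t-\tau,x-y)]$), together with the differentiation under the integral from the first step, the identity $\int_0^T\int(\partial_\tau+\Delta^{\alpha/2})\psi\cdot u=0$ becomes precisely $-\partial_t u_\eps(t,x)+\Delta^{\alpha/2}u_\eps(t,x)=0$. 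The exterior and initial conditions in \eqref{eq:FHE} hold by the very definitions of $g$ and $u_0$. The only points needing care are the admissibility of $\psi$, which is exactly why the conclusion is restricted to $D_\eps$ and $(\eps,T-\eps)$, and keeping track of the reflection symmetry of $\Delta^{\alpha/2}$ when passing it between the two arguments of $\eta_\eps$; everything else is routine Fubini and dominated convergence underpinned by Lemma~\ref{lem:l1l1}.
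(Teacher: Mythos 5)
Your argument is correct and essentially coincides with the paper's: both proofs exploit translation invariance to move the mollification onto the test function, the only cosmetic difference being that you obtain the pointwise equation by testing the weak identity directly against $\psi(\tau,y)=\eta_\eps(t-\tau,x-y)$, whereas the paper first records that $u_\eps$ is a distributional solution on $(\eps,T-\eps)\times D_\eps$ via the shifted test functions $\phi^{s,y}$ and then invokes the integration-by-parts lemma of Bogdan--Byczkowski to pass from a smooth distributional solution to a classical one. Your explicit verification of the regularity of $u_\eps$ (smoothness by differentiation under the integral, and membership in $C((\eps,T-\eps),L^1(1\wedge\nu))$ via Lemma~\ref{lem:l1l1}) fills in a step the paper leaves implicit, and it is done correctly.
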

	\begin{proof}
		First, we will show that $u_\eps$ is a distributional solution in $(\eps,T-\eps)\times D_\eps$. To this end, let $\phi\in C_c^\infty([\eps,T-\eps)\times D_\eps)$ and define $\phi^{s,y}(t,x) = \phi(t+s,x+y)$. Note that for $(s,y)\in \supp\, \eta_\eps$, $\phi^{s,y}$ is a test function for $u$. By Tonelli's theorem and the fact that $u$ satisfies \eqref{eq:FHE}, for $t\in (\eps,t-\eps)$ we have
		\begin{align*}
			&\int_D \phi(t,x)u_\eps(t,x) \, dx - \int_D \phi(\eps,x)u_\eps(\eps,x)\, dx\\ = &\int_{-\eps}^\eps\int_{B_\eps}\eta_\eps(s,y) \bigg(\int_{D}\phi(t,x)u(t-s,x-y)\, dx - \int_D \phi(\eps,x)u(\eps-s,x-y)\, dx\bigg)\, dy\, ds\\ =&\int_{-\eps}^\eps\int_{B_\eps}\eta_\eps(s,y) \bigg(\int_{D}\phi^{s,y}(t-s,x)u(t-s,x)\, dx - \int_D \phi^{s,y}(\eps-s,x)u(\eps-s,x)\, dx\bigg)\, dy\, ds\\ =
			&\int_{-\eps}^\eps\int_{B_\eps}\eta_\eps(s,y) \int_{\eps - s}^{t-s} \int_{\mR^d} (\partial_t + \Delta^{\alpha/2})\phi^{s,x}(\tau,x)u(\tau,x)\, dx\, d\tau \, dy\, ds\\
			 =
			&\int_{\eps}^{t} \int_{\mR^d} (\partial_t + \Delta^{\alpha/2})\phi(\tau,x)u_\eps(\tau,x)\, dx\, d\tau.
		\end{align*}
		The fact that $u_\eps$ is a classical solution follows from integration by parts \cite[Lemma~3.3]{MR1671973}.
	\end{proof}
	\begin{theorem}\label{th:disttomvp}
		If $u$ is a distributional solution to \eqref{eq:FHE}, then it is almost everywhere equal to a function caloric in $[0,T)\times D$.
	\end{theorem}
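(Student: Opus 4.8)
The plan is to pass from the mollified approximations $u_\eps$ (which are classical solutions by Lemma~\ref{lem:ueps}, hence caloric on $(\eps,T-\eps)\times D_\eps$) to a limit as $\eps\to 0^+$, and to identify that limit both with $u$ almost everywhere and with a function satisfying the mean value property. First I would fix a bounded Lipschitz $V$ with $U\subset\subset V\subset\subset D$ and a time interval $[a,b]\subset (0,T)$, and apply the representation \eqref{eq:urepr} to each $u_\eps$ on $(a,b)\times V$: for $(t,x)\in(a,b)\times V$ and $\eps$ small,
\begin{align*}
	u_\eps(t,x) = P_{t-a}^V[u_\eps(a,\cdot)](x) + \int_a^t\int_{V^c} J^V(t,x,s,z)u_\eps(s,z)\, dz\, ds + \int_{[a,t)}\int_{\partial V}\eta_{t-s,Q}(x)\, \mu_\eps(dQds),
\end{align*}
but in fact, because $u_\eps$ is already defined and continuous on all of $(\eps,T-\eps)\times\mR^d$ and solves the equation pointwise there, the boundary measure $\mu_\eps$ vanishes and only the first two terms survive. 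Then I would let $\eps\to 0^+$: by Lemma~\ref{lem:l1l1}, $u\in L^1((0,T),L^1(1\wedge|x|^{-d-\alpha}))$, so $u_\eps\to u$ in $L^1_{\rm loc}((0,T)\times\mR^d,1\wedge\nu)$ and, along a subsequence, for a.e.\ $(t,x)$; the exterior integral term converges by dominated convergence using the kernel bounds of Section~\ref{sec:estimates} together with the $L^1(1\wedge\nu)$-integrability, and the semigroup term converges since $u_\eps(a,\cdot)\to u(a,\cdot)$ in $L^1_{\rm loc}$ for a.e.\ $a$ and $P^V_{t-a}$ is given by a bounded kernel away from $a$.

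Having obtained the limit, I would define $\tilde u$ on $(0,T)\times\mR^d$ by that representation formula (with a.e.-chosen initial slice $a$, then checking independence of $a$ via the Markov property / semigroup composition, as in \cite{MR1671973}), and then verify directly that $\tilde u$ is caloric on $(0,T)\times D$: this is essentially the converse direction to Theorem~\ref{th:mvptodist}, and it follows because a function of the form \eqref{eq:urepr} satisfies the mean value property \eqref{eq:para} for every $G\subset\subset(0,T)\times D$ — one uses the strong Markov property of $\dot X_t$ at $\tau_G$ together with the fact that $P^V$, $J^V$, and $\eta$ are built precisely from exit distributions of the space-time process, exactly the structural content recalled from \cite{HK}. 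Nonnegativity is inherited from $u\geq 0$ in the pointwise a.e.\ limit. The behavior as $t\to 0^+$ (to promote caloricity from $(0,T)\times D$ to $[0,T)\times D$) is handled by Lemma~\ref{lem:ptdu0} and the semigroup convergence \eqref{eq:Usemigroup} established in the proof of Theorem~\ref{th:mvptodist}.

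The main obstacle I expect is the passage to the limit in the exterior term $\int_a^t\int_{V^c}J^V(t,x,s,z)u_\eps(s,z)\,dz\,ds$: the convergence $u_\eps\to u$ is only in $L^1(1\wedge\nu)$ locally in time, while $J^V(t,x,s,z)$ blows up like $\nu(x,z)$ as $s\nearrow t$ (Lemma~\ref{lem:JDdelta}) and also degenerates near $s=a$, so one needs a uniform-in-$\eps$ integrable majorant. The bound $\int_{V^c}J^V(t,x,s,z)\,dz\lesssim 1$ uniformly in $s,t$ (the second lemma of Section~\ref{sec:prelim}) together with local boundedness of $u_\eps$ near $\partial V$ — which holds because $u$ is continuous there after the reduction in Remark~\ref{rem:simp}, or more robustly because the mollification of an $L^1_{\rm loc}$ function near a set where $u$ is locally integrable is uniformly controlled — should supply this majorant. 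A secondary technical point is the a.e.\ choice of the initial time slice and the verification that the resulting $\tilde u$ does not depend on it; this is routine given the Chapman--Kolmogorov relation for $p^V_t$ and Fubini, but must be stated carefully so that $\tilde u$ is a genuine function (not merely an equivalence class) to which the pointwise mean value property applies.
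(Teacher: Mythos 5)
There is a genuine gap at the central step. You write that the mollifications $u_\eps$ are ``classical solutions by Lemma~\ref{lem:ueps}, hence caloric on $(\eps,T-\eps)\times D_\eps$'', and then apply the representation \eqref{eq:urepr} to them. But \eqref{eq:urepr} is a consequence of caloricity, and the implication ``classical solution $\Rightarrow$ caloric'' is exactly the content of this section; it cannot be asserted for free, and your later ``direct verification'' of the mean value property only applies to functions already known to have the form \eqref{eq:urepr}, so the argument is circular. The paper closes this loop with a comparison argument: it defines $v_\eps$ on $(\eps,T-\eps)\times D_\eps$ by parabolic sweeping of $u_\eps$ (so $v_\eps$ is caloric by construction and admits \eqref{eq:urepr}), invokes Theorem~\ref{th:mvpclass} to conclude that $v_\eps$ is a \emph{classical} solution --- this is where the Dini/Lipschitz time-regularity of $u_\eps$ supplied by mollification and Lemma~\ref{lem:l1l1} is actually used --- and then identifies $u_\eps=v_\eps$ by the maximum principle for continuous classical solutions on slightly smaller cylinders. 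Without some version of this step (or an It\^o/Dynkin argument applied directly to $u_\eps$, which you do not carry out), the proof does not get off the ground.

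A secondary problem is the limit passage in the exterior term. Your proposed majorant combines $\int_{V^c}J^V(t,x,s,z)\,dz\lesssim 1$ with ``local boundedness of $u_\eps$ near $\partial V$, uniformly in $\eps$'', but $u$ is only known to be in $L^1((0,T),L^1(1\wedge\nu))$ by Lemma~\ref{lem:l1l1}, and the mollification of an $L^1_{\rm loc}$ function is not uniformly bounded; Remark~\ref{rem:simp} concerns caloric functions and is not available here. The paper instead first obtains finiteness of $\int_{t_0}^t\int_{U^c}J^U(t,x,s,z)\,u(s,z)\,dz\,ds$ for the limit function via Fatou's lemma (using that the left-hand sides for $u_\eps$ are controlled by $u_\eps(t,x)-P^U_{t-t_0}u_\eps(t_0)(x)$), and then upgrades this to dominated convergence using the uniform bounds of \cite[Lemma~5.16, Proposition~5.15]{HK}. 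Your remaining points (vanishing of $\mu_\eps$, independence of the initial slice, the passage from $(0,T)$ to $[0,T)$ via weak convergence of $u(\eps,\cdot)$ to $u_0$ and uniqueness of the representation) are consistent with the paper, but they all sit downstream of the two issues above.
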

	\begin{proof}
	 By Lemma~\ref{lem:ueps}, $u_\eps$ is a distributional and a classical solution to \eqref{eq:FHE} on $(\eps,T-\eps)\times D_\eps$ and by Lemma~\ref{lem:l1l1}, $u_\eps \in C^{\rm Dini}([\eps,T-\eps],L^1(1\wedge \nu))$. If $(t,x)\in (\eps,T-\eps)\times D_\eps$, we define $v_\eps(t,x) = \mE^{(t,x)}[u(\dot{X}_{\tau_{(\eps,T-\eps)\times D_\eps}})]$, otherwise we let $v_\eps = u_\eps$. By Theorem~\ref{th:mvpclass} we find that $v_\eps$ is a classical solution to \eqref{eq:FHE}. Since both $u_\eps$ and $v_\eps$ are continuous on $[\eps+\delta,T-\eps-\delta]\times \overline{D_{\eps+\delta}}$ for any $\delta>0$ and solve \eqref{eq:FHE} pointwise therein, the maximum principle implies that $u_\eps = v_\eps$ in $(\eps,T-\eps)\times D_\eps$. In particular, we get that $u_\eps$ is caloric in $(\eps,T-\eps)\times D_\eps$, so for every (small) $\delta>0$, 
	 \begin{align*}
	 	u_\eps(t,x) =  P_{t-\eps-\delta}^{D_{\eps+\delta}} u_\eps(\eps+\delta)(x) + \int_{\eps+\delta}^t \int_{D_{\eps+\delta}^c} J^{D_{\eps+\delta}}(t,x,s,z) u_\eps (s,z)\, dz \, ds,\quad t\in (\delta+\eps,T-\eps),\ x\in D_{\eps+\delta}.
	 \end{align*} By the definition of distributional solution and Lemma~\ref{lem:l1l1}, we find that for any $U\subset\subset D$ the norms $\|u(t)\|_{L^1(U)}$ are uniformly bounded locally in $(0,T)$. This implies that the norms $\|u_\eps(t)\|_{L^1(U)}$ are uniformly bounded locally in $(\eps,T-\eps)$ (for $\eps$ small enough). Thus, by the definition of distributional solution and Lemma~\ref{lem:l1l1}, we have $\textstyle \int_D u_\eps(t)\phi \to \int_D u(t)\phi$ for all $t\in(0,T)$ and $\phi\in C_c^\infty(D)$, and by approximation also for all $\phi\in C_c(D)$. In particular, for any $t_0>0$ and $t\in (t_0,T)$ we have $P_{t_0}^Uu_\eps(t)(x) \to P_{t_0}^U u(t)(x)$ as $\eps\to 0^+$. Therefore, for almost every $(t,x)$ such that $t\in(t_0,T)$ and $x\in U$, by Fatou's lemma we find that 
	 \begin{align*}
	 	\int_{t_0}^t \int_{U^c} J^U(t,x,s,z) u(s,z)\, dz\, ds \leq \lim\limits_{\eps\to 0^+} \int_{t_0}^t \int_{U^c} J^U(t,x,s,z) u_\eps(s,z)\, dz\, ds = u(t,x) - P_{t-t_0}^U u(t_0)(x) <\infty.
	 \end{align*}
	 By this and  \cite[Lemma~5.16]{HK}, it follows that for almost all $t\in (0,T)$,
	 \begin{align*}
	 	\int_{t_0}^t \int_{U^c} J^U(t,x,s,z) u(s,z)\, dz\, ds < \infty,\quad x\in U,
	 \end{align*}
	 from which we get that the integral is finite for all $t\in (0,T)$ and $x\in U$. Furthermore, by \cite[Proposition~5.15]{HK} the integrals are bounded independently of $x\in U$ and $t\in (t_0,T-\eps)$. By the dominated convergence theorem we therefore get that 
	 \begin{align*}
	 \lim\limits_{\eps\to 0^+} \int_{t_0}^t \int_{U^c} J^U(t,x,s,z) u_\eps(s,z)\, dz\, ds = \int_{t_0}^t \int_{U^c} J^U(t,x,s,z) u(s,z)\, dz\, ds.
	  \end{align*}
	  This implies that $u$ is almost everywhere equal to a function $v$ caloric in $(0,T)\times D$. By the definition of the distributional solution we get that $u(\eps,\cdot) \to u_0$ weakly in $L^1(U)$ for $U\subset\subset D$, therefore by the uniqueness of the representation in \cite[Theorem~6.5]{HK} we also get that $v = \mE^{(t,x)}u(\dot{X}_{\tau_{U\times (0,t)}})$ for all $x\in U$ and $t\in [0,T]$. This ends the proof.
	\end{proof}
\appendix
\section{Continuity of $\partial_t u$}
We will prove that under the assumptions of Theorem~\ref{th:mvpclass} the function $\partial_t u(t,x)$ given by \eqref{eq:dtu} is continuous in $(0,T)\times B_1$. Continuity in $x$ follows from estimates obtained in the proof of Theorem~\ref{th:mvpclass} and the dominated convergence theorem. It is also easy to see that the second term on the right-hand side of \eqref{eq:dtu} is continuous in $t$. It remains to show that the first term on the right-hand side of \eqref{eq:dtu} is continuous in $t$. To this end, we let $0<t_1<t_2<T$. Then, for fixed $x\in B_1$,
\begin{align}
	|u(t_1,x) - u(t_2,x)| &\leq \bigg| \int_{t_1}^{t_2}\int_{B_1^c} \partial_t J^{B_1}(t_2,x,s,z) (g(s,z) - g(t_2,z)) \, dz\, ds\bigg|\nonumber\\
	&+ \bigg| \int_0^{t_1}\int_{B_1^c} (\partial_t J^{B_1}(t_1,x,s,z) - \partial_t J^{B_1}(t_2,x,s,z))(g(s,z) - g(t_1,z)) \, dz\, ds \bigg|\label{eq:second}\\
	&+ \bigg|\int_0^{t_1} \int_{B_1^c} \partial_t J^{B_1}(t_2,x,s,z)(g(t_1,z) - g(t_2,z))\, dz\, ds\bigg|.\nonumber
\end{align}
By Dini continuity of $g$ the first integral converges to 0 as $t_2 - t_1 \to 0$. In the last integral we use Fubini--Tonelli and the fundamental theorem of calculus:
\begin{align*}
	 &\bigg|\int_0^{t_1} \int_{B_1^c} \partial_t J^{B_1}(t_2,x,s,z)(g(t_1,z) - g(t_2,z))\, dz\, ds\bigg|\\
	 &\leq \int_{B_1^c} |g(t_1,z) - g(t_2,z)||J^{B_1}(t_2,x,t_1,z) - J^{B_1}(t_2,x,0,z)|\, dz\\
	 &\leq \int_{B_1^c} |g(t_1,z) - g(t_2,z)|(J^{B_1}(t_2,x,t_1,z) + J^{B_1}(t_2,x,0,z))\, dz.
\end{align*}
Since $u$ is continuous in $(0,T)\times B_2$ and $u\in C([0,T],L^1(1\wedge \nu))$, the above expression converges to 0 as $t_2 - t_1 \to 0$. In the integral \eqref{eq:second} we note that the integral over $B_1^c$ converges to 0 as $t_2 - t_1 \to 0$ for each fixed $s$. Furthermore, by \eqref{eq:JBest} and by the Dini continuity of $g$, the family of functions $\{\phi_{t_1,t_2}: 0<t_1\leq t_2 < T\}$ given by
\begin{align*}
	\phi_{t_1,t_2}(s) =  \textbf{1}_{(0,t_1)}(s)\int_{B_1^c} |\partial_t J^{B_1}(t_1,x,s,z) - \partial_t J^{B_1}(t_2,x,s,z)||g(s,z) - g(t_1,z)| \, dz,
\end{align*}
is uniformly integrable. Therefore, by Vitali's theorem the integral in \eqref{eq:second} converges to 0 as $t_2 - t_1 \to 0$. This proves that $u$ is continuous in $t$.
\section*{Declarations}
\subsection*{Ethical Approval} Not applicable.

\subsection*{Funding} Research was partially supported by the National Science Center of Poland (NCN) under grant 2019/35/N/ST1/04450.

\subsection*{Availability of data and materials} Not applicable.
\bibliographystyle{abbrv}
\bibliography{HK.bib}
\end{document}